\newtheorem{theorem}{Theorem}
\newtheorem{corollary}[theorem]{Corollary}
\newtheorem{lemma}[theorem]{Lemma}
\newtheorem{proposition}[theorem]{Proposition}
\begin{document}
\title{Analytic regularity for a singularly perturbed fourth order
reaction-diffusion boundary value problem}

\author{P. Constantinou  and C. Xenophontos
\footnote{Corresponding author. Email: xenophontos@ucy.ac.cy} \\
Department of Mathematics and Statistics\\
University of Cyprus\\
 PO BOX 20537\\Nicosia 1678\\
 Cyprus.
}


\maketitle

\begin{abstract}
We consider a fourth order, reaction-diffusion type, singularly perturbed boundary
value problem, and the regularity of its solution. Specifically, we provide estimates for arbitrary 
order derivatves, which are explicit in the singular perturbation parameter as well as the 
differentiation order. Such estimates are needed for the numerical analysis of high order methods, e.g.
$hp$ Finite Element Method (FEM).
\end{abstract}

\section{Introduction}

\label{intro}

Singularly perturbed problems (SPPs), and the numerical approximation of their
solution, have been studied extensively over the last few decades (see,
e.g., the books \cite{mos}, \cite{morton}, \cite{rst} and the references
therein). As is well known, a main difficulty in these problems is the
presence of \emph{boundary layers} in the solution, which appear due to fact
that the limiting problem (i.e. when the singular perturbation parameter
tends to 0), is of different order than the original one, and the (`extra')
boundary conditions can only be satisfied if the solution varies rapidly in
the vicinity of the boundary -- hence the name {\emph{boundary layers}}.

In most numerical methods, derivatives of the (exact) solution appear in
the error estimates, hence one should have a clear picture of how these
derivatives grow with respect to the singular perturbation parameter. For
low order numerical methods, such as Finite Differences (FD) or the $h$
version of the Finite Element Method (FEM), derivatives up to order 3 are
usually sufficient. For high order methods such as the $hp$ version of the
FEM, derivatives of arbitrary order are needed, thus knowing how these
behave with respect to the singular perturbation parameter as well as
the differentiation order, is necessary. Most commonly, second order 
SPPs are studied, but recently fourth order SPPs have gained in popularity (see, e.g., 
\cite{FR}, \cite{FRW}, \cite{PZMX}, \cite{X} and the references therein).
For the problem under consideration (see eq. (\ref{bvp}) ahead),
a boundary layer will be present in the \emph{derivative} of the solution, as 
opposed to the solution itself as is the case with second order SPPs.

In this article we prove two types of regularity for the solution: first we show
classical differentiability and then, using the method of matched asymptotic 
expansions (see, e.g. \cite{ms}), we refine the estimates by decomposing the solution into a 
smooth part, two boundary layers (one at each endpoint of the domain) and a
(negligible) remainder. Under the assuption of analytic input data, we give
detailed estimates on the behavior of the derivatives of each component in 
the decomposition. The material presented in this article is a modification of Ch. 2 in \cite{PC}.

The rest of the paper is organized as follows: in Section \ref{model} we
present the model problem and the regularity of its solution in terms of classical
differentiability. Section \ref{sec:DE} contains the asymptotic expansion for
the solution, under the assumption that the singular perturbation parameter
is small enough, and in Section \ref{sec:main} we present our main result,
which describes the analytic regularity of the solution.

With $I\subset \mathbb{R}$ an open, bounded interval with boundary $\partial I$ and
measure $\left\vert I\right\vert $, we will denote by $C^{k}(I)$ the space
of continuous functions on $I$ with continuous derivatives up to order $k$.
We will use the usual Sobolev spaces $H^{k}(I)=W^{k,2}$ of functions on $I $
with $0,1,2,...,k$ generalized derivatives in $L^{2}\left( I\right) $,
equipped with the norm and seminorm $\left\Vert \cdot \right\Vert _{k,I}$
and $\left\vert \cdot \right\vert _{k,I}$, respectively. The
usual $L^{2}(I)$ inner product will be denoted by $\left\langle \cdot ,\cdot
\right\rangle _{I}$, with the subscript omitted when there is no confusion.
We will also use the space 
\begin{equation*}
H_{0}^{2}\left( I\right) =\left\{ u\in H^{1}\left( I\right) :\left. u\right\vert _{\partial I }=
\left. u'\right\vert _{\partial I }=0\right\} .
\end{equation*}%
The norm of the space $L^{\infty }(I)$ of essentially bounded functions is
denoted by $\Vert \cdot \Vert _{\infty ,I}$. Finally, the notation
\textquotedblleft $a\lesssim b$\textquotedblright\ means \textquotedblleft $%
a\leq Cb$\textquotedblright\ with $C$ being a generic positive constant,
independent of any singular perturbation parameters.


\section{The model problem and its regularity}\label{model}
We consider the following model problem: find $u \in C^4 (I), I=(0,1)$, such that
\begin{equation}
\label{bvp}
\left.\begin{array}{c}
\varepsilon^2 u^{(4)}(x)-\left(\alpha(x) u^{\prime}(x)\right)^{\prime}+\beta(x) u(x)=f(x), \text { for } x \in I \\
u(0)=u^{\prime}(0)=u(1)=u^{\prime}(1)=0
\end{array}\right\}
\end{equation}
where $\alpha(x)  > 0, \beta(x) \geq 0 \;  \forall  \; x \in \bar{I}$ 
and $f$ are given (analytic) functions. Specifically, we assume that there exist positive constants independent

\begin{equation}
\label{analytic}
\left\|\alpha^{(n)}\right\|_{\infty,I} \leq C_\alpha \gamma_\alpha^n n !, \;\left\|\beta^{(n)}\right\|_{\infty,I} \leq C_\beta \gamma_\beta^n n !, \; \left\|f^{(n)}\right\|_{\infty,I} \leq C_f \gamma_f^n n !.
\end{equation}

The solution to (\ref{bvp}) will be analytic, if the data is analytic, and in particular there holds
\begin{equation}
\label{cd}
\Vert u^{(n)} \Vert_{\infty,I} \lesssim K^n \max \{n^n, \varepsilon^{1-n} \},
\end{equation}
for some constant $K$, indepedent of $\varepsilon$ and $n$. This is shown in the same inductive way as
Theorem 1 in \cite{melenk97}.

The above estimate may be sharpened if one uses the method of asymptotic expansions (see, e.g., \cite{ms}). 
Anticipating that boundary layers will be present near the endpoints of $I$, we define
the stretched variables $\tilde{x} = x / \varepsilon$ , $\hat{x} = (1 - x) / \varepsilon$, and make
the formal ansatz
\begin{equation}
\label{ansatz}
u \sim \sum_{j=0}^{\infty} \varepsilon^j\left[U_j(x)+\tilde{U}_j(\tilde{x})+\hat{U}_j(\hat{x})\right],
\end{equation}
where the functions $U_j, \tilde{U}_j, \hat{U}_j$ will be determined shortly. We insert (\ref{ansatz}) in the
differential equation in (\ref{bvp}) and separate the slow (i.e. $x$) and fast (i.e. $\tilde{x}, \hat{x}$) variables.
Equating like powers of $\varepsilon$ on both sides of the resulting equation, we get
\begin{equation}
\label{eq:Uj}
\left.\begin{array}{ll}
-\left(\alpha(x) U_0^{\prime}(x)\right)^{\prime}+\beta(x) U_0(x)=f(x), & \\
-\left(\alpha(x) U_1^{\prime}(x)\right)^{\prime}+\beta(x) U_1(x)=0, & \\
-\left(\alpha(x) U_j^{\prime}(x)\right)^{\prime}+\beta(x) U_j(x)=-U_{j-2}^{(4)}(x), \quad j=2,3, \cdots
\end{array}\right\}
\end{equation}
\begin{equation}
\label{eq:Utj}
\left.\begin{array}{l}
\tilde{U}_0^{(4)}(\tilde{x})-\left(\tilde{\alpha}_0(\tilde{x}) \tilde{U}_0^{\prime}(\tilde{x})\right)^{\prime}=0, \\
\tilde{U}_1^{(4)}(\tilde{x})-\left(\tilde{\alpha}_0(\tilde{x}) \tilde{U}_1^{\prime}(\tilde{x})\right)^{\prime}=0, \\
\tilde{U}_j^{(4)}(\tilde{x})-\left(\tilde{\alpha}_0(\tilde{x}) \tilde{U}_j^{\prime}(\tilde{x})\right)^{\prime}=\tilde{F}_j(\tilde{x}), \quad j=2,3, \cdots
\end{array}\right\}
\end{equation}
and similarly for $\hat{U_j}$. In (\ref{eq:Utj}) we used the notation 
$$\tilde{\alpha}_k(\tilde{x}) = 
\frac{\alpha^{(k)}(0)}{k!}\tilde{x}^k \; , \;\tilde{\beta}_k(\tilde{x}) = 
\frac{\beta^{(k)}(0)}{k!}\tilde{x}^k
$$
and the right hand side functions are given by
$$
\tilde{F}_j(\tilde{x}):=\sum_{k=1}^{A_j} \tilde{\alpha}_k(\tilde{x}) \tilde{U}_{j-k}^{\prime \prime}(\tilde{x})+\sum_{k=1}^{A_j} \tilde{\alpha}_k^{\prime}(\tilde{x}) \tilde{U}_{j-k}^{\prime}(\tilde{x})-\sum_{k=0}^{B_j} \tilde{\beta}_k(\tilde{x}) \tilde{U}_{j-2-k}(\tilde{x}),
$$
where
$$
A_j=\left\{\begin{array}{ll}
\frac{j}{2}, & \text { if } j \text { is even, } \\
\frac{j-1}{2}, & \text { if } j \text { is odd }
\end{array} \quad B_j= \begin{cases}\frac{j-2}{2}, & \text { if } j \text { is even } \\
\frac{j-3}{2}, & \text { if } j \text { is odd }\end{cases}\right. .
$$
An analogous system as (\ref{eq:Utj}) is obtained for the functions $\hat{U}_j$
with $\hat{\alpha}_k(\hat{x}) = \frac{\alpha^{(k)}(1)}{k!}\hat{x}^k \; , \;\hat{\beta}_k(\hat{x}) = 
\frac{\beta^{(k)}(1)}{k!}\hat{x}^k$.
In order to satisfy the boundary conditions in (\ref{bvp}), the above systems of equations are supplemented with the following:
\begin{equation}
\label{bc_ij}
\left.
\begin{aligned}
& U_0(0)=U_0(1)=0,\\
& \tilde{U}_0(0)=0, \tilde{U}_j(0)=-U_j(0), j \geq 1, \\
& \tilde{U}_0^{\prime}(0)=0, \tilde{U}_{j+1}^{\prime}(0)=-U_j^{\prime}(0), j \geq 0, \\
& \lim _{\tilde{x} \rightarrow \infty} \tilde{U}_j(\tilde{x})=0, j \geq 0, \\
& \hat{U}_0(0)=0, \hat{U}_j(0)=-U_j(1), j \geq 1, \\
& \lim _{\hat{x} \rightarrow \infty} \hat{U}_j(\hat{x})=0, j \geq 0, \\
& \hat{U}_0^{\prime}(0)=0, \hat{U}_{j+1}^{\prime}(0)=-U_j^{\prime}(1), j \geq 0 .
\end{aligned}
\right\}
\end{equation}
The following table displays all the boundary value problems satisfied by 
$U_j, \tilde{U}_j, \hat{U}_j , j \in \mathbb{N}_0$.

\begin{table}[h]
\begin{center}
\begin{equation*}
\begin{array}{||l||l||l||}
\hline \text { Outer solution } & \text { Boundary layer at } x=0 & \text { Boundary layer at } x=1 \\
\hline \hline-\left(\alpha U_0^{\prime}\right)^{\prime}+\beta U_0=f, & \tilde{U}_0^{(4)}-\left(\tilde{\alpha}_0 \tilde{U}_0^{\prime}\right)^{\prime}=0, & \hat{U}_0^{(4)}-\left(\hat{\alpha}_0 \hat{U}_0^{\prime}\right)^{\prime}=0, \\
U_0(0)=-\tilde{U}_0(0)=0, & \lim _{\tilde{x} \rightarrow \infty} \tilde{U}_0(\tilde{x})=0, & \lim _{\hat{x} \rightarrow \infty} \hat{U}_0(\hat{x})=0, \\
U_0(1)=-\hat{U}_0(0)=0, & \tilde{U}_0^{\prime}(0)=0, & \hat{U}_0^{\prime}(0)=0, \\
\hline \hline-\left(\alpha U_1^{\prime}\right)^{\prime}+\beta U_1=0, & \tilde{U}_1^{(4)}-\left(\tilde{\alpha}_0 \tilde{U}_1^{\prime}\right)^{\prime}=0, & \hat{U}_1^{(4)}-\left(\hat{\alpha}_0 \hat{U}_1^{\prime}\right)^{\prime}=0, \\
U_1(0)=-\tilde{U}_1(0), & \lim _{\tilde{x} \rightarrow \infty} \tilde{U}_1(\tilde{x})=0, & \lim _{\hat{x} \rightarrow \infty} \hat{U}_1(\hat{x})=0, \\
U_1(1)=-\hat{U}_1(0), & \tilde{U}_1^{\prime}(0)=-U_0^{\prime}(0), & \hat{U}_1^{\prime}(0)=-U_0^{\prime}(1), \\
\hline \hline-\left(\alpha U_j^{\prime}\right)'+\beta U_j=-U_{j-2}^{(4)}, & \tilde{U}_j^{(4)}-\left(\tilde{\alpha}_0 \tilde{U}_j^{\prime}\right)'=\tilde{F}_j, & \hat{U}_j^{(4)}-\left(\hat{\alpha}_0 \hat{U}_j^{\prime}\right)^{\prime}=\hat{F}_j, \\
U_j(0)=-\tilde{U}_j(0), & \lim _{\tilde{x} \rightarrow \infty} \tilde{U}_j(\tilde{x})=0, & \lim _{\hat{x} \rightarrow \infty} \hat{U}_j(\hat{x})=0, \\
U_j(1)=-\hat{U}_j(0), & \tilde{U}_j^{\prime}(0)=-U_{j-1}^{\prime}(0), & \hat{U}_j^{\prime}(0)=-U_{j-1}^{\prime}(1) . \\
\hline
\end{array}
\end{equation*}
\end{center}
\caption{\label{table1}
Different boundary value problems satisfied by $U_j, \tilde{U}_j, \hat{U}_j , j \in \mathbb{N}_0$.}
\end{table}

\vspace{0.25cm}

Next, we define for $M \ \in \mathbb{N}$, the outer (smooth) expansion $u_M^S$ as
\begin{equation}
\label{uSM}
u_M^S (x) := \sum_{j=0}^M \varepsilon^j U_j(x),
\end{equation}
the boundary layer expansion at the left endpoint $\tilde{u}_M^{BL} (x)$ as
\begin{equation}
\label{utM}
\tilde{u}_M^{BL} (x) := \sum_{j=0}^M \varepsilon^j \tilde{U}_j(\tilde{x}),
\end{equation}
and the boundary layer expansion at the right endpoint $\hat{u}_M^{BL} (x)$ as
\begin{equation}
\label{uhM}
\hat{u}_M^{BL} (x) := \sum_{j=0}^M \varepsilon^j \hat{U}_j(\hat{x}).
\end{equation}
Finally, we define the remainder $r_M$ as
\begin{equation}
\label{rM}
r_M := u - \left( u_M^S + \tilde{u}_M^{BL} + \hat{u}_M^{BL} \right),
\end{equation}
and we have the \emph{decomposition}
\begin{equation}\label{decomp}
u = u_M^S + \tilde{u}_M^{BL} + \hat{u}_M^{BL} + r_M.
\end{equation}

\section{Derivative Estimates}\label{sec:DE}
In this section we present regularity results for each term in the decomposition (\ref{decomp}). Due to (\ref{bc_ij}),
we have to analyze the `triple' $\left( U_j, \tilde{U}_j, \hat{U}_j\right)$ simultaneously. For this we will need some
preliminary results, which we present next.
\subsection{Preliminaries}

\begin{proposition}
\label{prop:aux}
Let $w$ be the solution of the BVP
\begin{eqnarray*}
-(\alpha(x) w'(x))' + \beta(x)w(x) &=& g(x) \; , \; x \in I, \\
w(0) = g^{-} \in \mathbb{R} \; , \; w(1)&=&g^{+} \in \mathbb{R},
\end{eqnarray*}
where $g$ is an analytic function satisfying bounds similar to (\ref{analytic}). Then
there exist positive constants $C, \gamma$, such that
\[
\Vert u^{(n)} \Vert_{\infty, I} \leq C \left( |g^{-}| + |g^{+}| \right) \gamma^n n! \quad \forall \; n \in \mathbb{N}_0.
\]
\end{proposition}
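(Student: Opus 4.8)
The plan is to reduce the equation to a linear second-order ODE in standard form with analytic coefficients, establish the cases $n=0,1$ by an energy argument, and then close an induction on the differentiation order $n$ using the Leibniz rule — the same scheme as for Theorem~1 in \cite{melenk97}. Since $\alpha$ is analytic and bounded below by a positive constant on $\bar I$, its reciprocal $1/\alpha$ is analytic and satisfies bounds of the type (\ref{analytic}); expanding $-(\alpha w')'=-\alpha w''-\alpha' w'$ we may rewrite the BVP as
\begin{equation*}
w''(x)=a_0(x)\,w(x)+a_1(x)\,w'(x)+h(x),\qquad w(0)=g^-,\ w(1)=g^+,
\end{equation*}
with $a_0=\beta/\alpha$, $a_1=-\alpha'/\alpha$ and $h=-g/\alpha$, all analytic; concretely there are constants $C_\ast,\gamma_\ast$ (depending only on $\alpha,\beta$) and $C_g'$ (depending on the analyticity constants of $g$) with $\|a_i^{(m)}\|_{\infty,I}\le C_\ast\gamma_\ast^m m!$ and $\|h^{(m)}\|_{\infty,I}\le C_g'\gamma_\ast^m m!$ for all $m\in\mathbb{N}_0$. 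By linearity I write $w=w_b+w_g$, where $w_b$ solves the homogeneous equation ($h\equiv 0$) with the boundary data $g^\pm$ and $w_g$ solves the inhomogeneous equation with zero boundary data; the two are treated identically, the former producing the explicit factor $|g^-|+|g^+|$ and the latter a contribution $\lesssim\gamma^n n!$ that is absorbed into $C$.

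For the base of the induction, lift the boundary data by the affine function $\ell(x)=g^-(1-x)+g^+x$, so that $w-\ell\in H^1_0(I)$ solves a problem with homogeneous boundary conditions and right-hand side controlled by $g$ and $g^\pm$. Coercivity of the associated bilinear form (from $\alpha\ge\alpha_0>0$ and $\beta\ge 0$) together with the Lax--Milgram lemma give a unique weak solution with $\|w\|_{1,I}\lesssim\|g\|_{0,I}+|g^-|+|g^+|$; the one-dimensional embedding $H^1(I)\hookrightarrow L^\infty(I)$ then bounds $\|w\|_{\infty,I}$, and reading $w''$ (hence $w'$) directly from the ODE — the coefficients being smooth up to the endpoints, so that no compatibility obstruction arises in one dimension — bounds $\|w'\|_{\infty,I}$ as well. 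This furnishes $\|w^{(k)}\|_{\infty,I}\le C_w\,\gamma_w^{k}\,k!$ for $k=0,1$ with a constant $C_w$ of the required form, for any $\gamma_w\ge 1$.

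For the inductive step, differentiate the ODE $n$ times by the Leibniz rule and solve for the leading derivative:
\begin{equation*}
w^{(n+2)}=\sum_{k=0}^{n}\binom{n}{k}\Bigl(a_0^{(n-k)}w^{(k)}+a_1^{(n-k)}w^{(k+1)}\Bigr)+h^{(n)}.
\end{equation*}
Assuming $\|w^{(k)}\|_{\infty,I}\le C_w\gamma_w^{k}k!$ for all $k\le n+1$ and inserting the analytic bounds on $a_0,a_1,h$, one uses the elementary identity $\binom{n}{k}(n-k)!\,k!=n!$ to collapse the binomial sums; the geometric series $\sum_{k}(\gamma_\ast/\gamma_w)^{n-k}$ converges once $\gamma_w>\gamma_\ast$, the stray polynomial factor from the $w^{(k+1)}$ term is at most $n+1$, and all of these — together with $C_\ast$, $C_g'$ and the $h^{(n)}$ term — are swallowed by the surplus factor $\gamma_w^{2}(n+1)(n+2)$ in the target bound $C_w\gamma_w^{n+2}(n+2)!$, provided $\gamma_w$ is fixed large enough (depending only on $\alpha,\beta$ through $C_\ast,\gamma_\ast$, and not on $g^\pm$; note $\varepsilon$ does not even enter this proposition). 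This closes the induction and gives the claim with $\gamma=\gamma_w$.

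I expect the only delicate point to be the constant bookkeeping in the inductive step — fixing $\gamma_w$ so that the convergent geometric series, the binomial collapse, and the polynomial slack all balance, while keeping $C_w$ proportional to $|g^-|+|g^+|$ (up to the fixed right-hand-side contribution). Conceptually there is nothing new here: it is the standard fact that solutions of linear ODEs with analytic coefficients are analytic, made quantitative exactly as in \cite{melenk97}. A secondary point worth stating carefully is the affine lifting that reduces the inhomogeneous Dirichlet data to the homogeneous setting in which Lax--Milgram and the energy estimate apply.
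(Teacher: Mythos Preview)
The paper does not actually prove this proposition: its entire proof reads ``This is a classical result, see, e.g.\ \cite{GLC}.'' Your sketch, by contrast, supplies a self-contained argument --- rewrite as $w''=a_0 w+a_1 w'+h$ with analytic $a_0,a_1,h$, establish the base cases $n=0,1$ via an affine lifting and the energy estimate (Lax--Milgram plus the embedding $H^1(I)\hookrightarrow L^\infty(I)$, then bootstrap $w''\in L^2$ to get $w'\in L^\infty$), and close by Leibniz-rule induction in $n$. This is exactly the classical route to analytic regularity for linear ODEs with analytic coefficients, in the spirit of \cite{melenk97}, and the constant balancing you describe (choose $\gamma_w>\gamma_\ast$ so that the geometric tail and the extra $(k+1)$ factor are absorbed by the slack $\gamma_w^2(n+1)(n+2)$) is correct.

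One remark: you correctly observe that the stated bound $C(|g^-|+|g^+|)\gamma^n n!$ cannot literally hold unless the right-hand-side contribution from $g$ is folded into $C$ (otherwise take $g\not\equiv 0$, $g^\pm=0$). Your split $w=w_b+w_g$ and the accompanying comment handle this in the only sensible way, and this is also how the proposition is effectively used later in the paper (where the right-hand side $U_{j-1}^{(4)}$ is estimated separately and added to the boundary-data contribution). So your treatment is consistent with the intended use, even if the proposition as written is slightly imprecise.
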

\begin{proof}
This is a classical result, see, e.g. \cite{GLC}.
\end{proof}

\begin{proposition}\label{prop:MXO}
For $C_1, d>0$ and $\rho \ge 0$, the following estimates are valid with
$\tilde{\rho}=\rho / \varepsilon, \gamma=2 \max \left\{1, C_1^2\right\}$:
$$
\begin{aligned}
& \left(C_1 l+\tilde{\rho}\right)^{2 l} \leq 2^l\left(C_1 l\right)^{2 l}+2^l \tilde{\rho}^{2 l} \leq \gamma^l\left(l^{2 l}+\tilde{\rho}^{2 l}\right), \\
& \sup _{\rho>0} \rho^n \mathrm{e}^{-\frac{d \rho}{4}} \leq\left(\frac{4 n}{\mathrm{e}d}\right)^n .
\end{aligned}
$$
\end{proposition}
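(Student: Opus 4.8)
Both displayed estimates are elementary and self-contained: neither invokes the boundary value problem~(\ref{bvp}) nor the analyticity bounds~(\ref{analytic}), so the plan is simply to establish the two lines separately, treating $l$ and $n$ as arbitrary and $\tilde{\rho}=\rho/\varepsilon$ as a fixed nonnegative number.

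\emph{The first chain.} I would start from the elementary convexity inequality $(a+b)^m\leq 2^{m-1}(a^m+b^m)\leq 2^m(a^m+b^m)$, valid for all $a,b\geq 0$ and $m\geq 1$; it follows from convexity of $t\mapsto t^m$ on $[0,\infty)$, or, if one prefers, from the case distinction $a\geq b$ (so that $a+b\leq 2a$) versus $a<b$. Applying it with $m=2l$, $a=C_1 l$, $b=\tilde{\rho}$ gives the first inequality of the chain. For the second inequality I would write $(C_1 l)^{2l}=(C_1^2)^l\, l^{2l}$ and then bound the coefficient of $l^{2l}$ and the coefficient of $\tilde{\rho}^{2l}$ each by $\gamma^l$; this is exactly what pins down $\gamma$ as a suitable multiple of $\max\{1,C_1^2\}$ — large enough to dominate simultaneously the power of $2$ times $C_1^2$ produced by the smooth term and the bare power of $2$ produced by the layer term. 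The rescaling $\tilde{\rho}=\rho/\varepsilon$ plays no role whatsoever in this computation.

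\emph{The second estimate.} Here the plan is to reduce to a one-variable maximization. For fixed $n\geq 1$ put $\varphi(\rho)=\rho^n\mathrm{e}^{-d\rho/4}$ for $\rho>0$; since $\varphi>0$, it is equivalent to maximize $\log\varphi(\rho)=n\log\rho-\tfrac{d}{4}\rho$. Its derivative $n/\rho-d/4$ vanishes exactly at $\rho_\star=4n/d\in(0,\infty)$, and $(\log\varphi)''(\rho)=-n/\rho^2<0$, so $\rho_\star$ is the unique global maximizer; substituting back gives $\varphi(\rho_\star)=(4n/d)^n\mathrm{e}^{-n}=\bigl(4n/(\mathrm{e}d)\bigr)^n$, which is the asserted bound (in fact an equality for $n\geq 1$). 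The degenerate case $n=0$, where the supremum equals $1$ and matches the right-hand side under the convention $0^0=1$, needs only a one-line remark.

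There is really no serious obstacle here — the proposition merely assembles two technical tools for later use (with $l$ a differentiation order and $\rho$ a distance to the boundary). The only points that call for a little care are the constant bookkeeping in the first line, i.e.\ checking that one fixed choice of $\gamma$ controls both coefficients at once, and, in the second line, noting that the critical point $\rho_\star=4n/d$ lies in the open half-line, so that the supremum is an interior maximum rather than a boundary value at $\rho\to 0^{+}$ or $\rho\to\infty$; both are immediate.
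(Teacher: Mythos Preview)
The paper does not actually prove this proposition; it simply cites \cite[Lemma~51]{MXO}. Your direct elementary argument is therefore more than the paper itself supplies, and your treatment of the second estimate---the one-variable maximization of $\rho\mapsto\rho^{n}\mathrm{e}^{-d\rho/4}$ via $\rho_\star=4n/d$---is correct and complete.

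There is, however, a real slip in your handling of the first chain. Invoking $(a+b)^{m}\leq 2^{m-1}(a^{m}+b^{m})$ with $m=2l$ yields a prefactor $2^{2l-1}$, not the $2^{l}$ printed in the statement, so it does \emph{not} ``give the first inequality of the chain.'' This is not a matter of sharpening your bound: the intermediate inequality as printed is actually false. Taking $C_{1}=1$, $l=2$, $\tilde\rho=2$ gives $(2+2)^{4}=256$ on the left but $2^{2}\bigl(2^{4}+2^{4}\bigr)=128$ on the right; the same example defeats the final bound with $\gamma=2\max\{1,C_{1}^{2}\}=2$. The intended statement almost certainly carries $2^{2l-1}$ (or $2^{2l}$) in the middle and $\gamma=4\max\{1,C_{1}^{2}\}$ at the end, and with those constants your convexity argument goes through verbatim. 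You half-anticipated this by writing ``a suitable multiple of $\max\{1,C_{1}^{2}\}$,'' but you should flag the discrepancy explicitly rather than assert that the convexity step delivers the printed line. Downstream the proposition is used only qualitatively (in Corollary~\ref{cor_BLn}), so the constant correction is harmless there.
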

\begin{proof}
This was shown in \cite[Lemma 51]{MXO}.
\end{proof}

\begin{proposition}
\label{prop:lem_aux2}
Let $j \in \mathbb{N}, \kappa, \lambda \in \mathbb{C}^+$, with $\lambda = \kappa^2$ and let $F$ be an entire 
function satisfying, for some $C_F > 0$ and $q > \frac{4j}{|\kappa|}$,
$$
|F(z)| \leq C_F \mathrm{e}^{-\operatorname{Re}(\kappa z)}(q+|z|)^{2 j-1}, \quad \text { for all } z \in \mathbb{C}.
$$
Furthermore, let $g_1 \in \mathbb{C}$ and let $w:(0,\infty) \rightarrow \mathbb{C}$ be the solution of
$$
w^{(4)}-\lambda w^{(2)}=F \text { on }(0, \infty), \quad w^{\prime}(0)=g_1, \lim _{x \rightarrow \infty} w(x)=0.
$$
Then, $w$ can be extended to an entire function (denoted again by $w$) which satisfies
$$
|w(z)| \leq C\left[\frac{C_F}{2 j}(q+|z|)^{2 j}+\frac{\left|g_1\right|}{|\kappa|}\right] \mathrm{e}^{-R e(\kappa z)}, \quad \text { for all } z \in \mathbb{C}.
$$
\end{proposition}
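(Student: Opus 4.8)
The plan is to solve the ODE explicitly by a succession of integrations and to propagate the exponential-polynomial bound on $F$ through each integration, keeping careful track of how the polynomial degree and the constants change. First I would reduce the fourth-order equation to a second-order one: setting $v = w'' - \lambda w$ (equivalently $v=w''-\kappa^2 w$), the equation $w^{(4)}-\lambda w^{(2)}=F$ becomes $v'' - \lambda v = F$, a Helmholtz-type equation on $(0,\infty)$. I would solve this for $v$ using the variation-of-parameters/Green's function for $-v''+\lambda v$ that is bounded (indeed decaying) at $+\infty$, namely $v(z) = -\frac{1}{2\kappa}\int_0^\infty e^{-\kappa|z-t|}F(t)\,dt$ up to a decaying homogeneous term $c\,e^{-\kappa z}$ whose coefficient is fixed by the boundary data; since $\mathrm{Re}\,\kappa>0$ this kernel decays and, crucially, preserves entirety because $F$ is entire and the contour can be deformed.

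Second, I would bound $|v(z)|$: plugging $|F(t)|\le C_F e^{-\mathrm{Re}(\kappa t)}(q+|t|)^{2j-1}$ into the Green's-function representation, splitting the integral at $t=|z|$ (or using the triangle inequality $|z-t|\ge \pm(z-t)$ on the two rays), and using that $\int e^{-\mathrm{Re}(\kappa z)}(q+|t|)^{2j-1}dt$ produces a gain of one power, I expect $|v(z)| \le C\,\frac{C_F}{2j}\,(q+|z|)^{2j}\,e^{-\mathrm{Re}(\kappa z)}$ plus a contribution $\lesssim |g_1| e^{-\mathrm{Re}(\kappa z)}$ from the homogeneous part; the hypothesis $q>4j/|\kappa|$ is exactly what makes the elementary estimate $\int_0^{|z|} e^{|\kappa|(t-|z|)}(q+t)^{2j-1}dt \le \frac{1}{|\kappa|}(q+|z|)^{2j-1}$-type bound absorb the extra factor cleanly (a version of Proposition~\ref{prop:MXO}'s second inequality, with $\rho\mapsto q+|z|$, controlling $(q+|z|)^{2j-1}$ against the exponential). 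Third, I would recover $w$ from $v$ by solving the first-order-in-$w''$ relation $w'' - \kappa^2 w = v$ — again a decaying-at-infinity Green's function $w(z)=-\frac{1}{2\kappa}\int_0^\infty e^{-\kappa|z-t|}v(t)\,dt$ plus a homogeneous term $c'e^{-\kappa z}$, with $c'$ now fixed by $w'(0)=g_1$ (the condition $\lim_{x\to\infty}w=0$ having already selected the decaying branch). The same integral estimate, applied to the bound on $v$, raises the degree from $2j$ to $2j+1$ but also supplies another factor of $1/|\kappa|$, and — this is the point — one does \emph{not} want the degree to grow, so I would instead integrate $v$ against the kernel only \emph{once more carefully}: in fact the correct bookkeeping is that $w''-\kappa^2 w=v$ with $v$ of the stated form yields $w$ of the \emph{same} exponential type with polynomial degree $2j$ (the two convolutions against $e^{-\kappa|z-t|}$ each contribute one power, but the $1/\lambda=1/\kappa^2$ normalization of the fourth-order Green's function relative to the data removes one), giving $|w(z)|\le C\bigl[\tfrac{C_F}{2j}(q+|z|)^{2j}+\tfrac{|g_1|}{|\kappa|}\bigr]e^{-\mathrm{Re}(\kappa z)}$.

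For the analytic continuation: each representation of $v$ and $w$ as $\int_0^\infty e^{-\kappa|z-t|}(\cdot)\,dt$ is initially defined for real $z>0$, but one checks that, because the integrand decays in $t$ and $F$ (hence $v$) is entire, the formula extends holomorphically to all of $\mathbb{C}$ by writing $e^{-\kappa|z-t|}$ as $e^{-\kappa(t-z)}$ on $\{t>\mathrm{Re}\,z\}$ and $e^{\kappa(t-z)}\cdot e^{-2\kappa t}\cdot e^{2\kappa z}$-type pieces and deforming; equivalently, one verifies directly that the claimed entire extension solves the ODE and matches the boundary data, invoking uniqueness of the decaying-at-$+\infty$ solution. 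The main obstacle, and the step deserving the most care, is the second one — the convolution estimate for $v$ — specifically showing that splitting $\int_0^\infty e^{-\kappa|z-t|}|F(t)|\,dt$ at the point where $t = \mathrm{Re}\,z$ (or $t=|z|$) and bounding each piece produces exactly the factor $\frac{1}{2j}(q+|z|)^{2j}$ and not something with an extra $q$-dependent or $j$-dependent constant; this is where the precise hypothesis $q > 4j/|\kappa|$ is consumed, and getting the constant to be genuinely independent of $j$, $\kappa$, $q$ (so that it can be absorbed into the single generic $C$) requires the elementary but delicate inequality $\int_0^{s} e^{-|\kappa|(s-t)}(q+t)^{m}\,dt \le \frac{2}{|\kappa|}(q+s)^{m}$ valid when $|\kappa|(q+s)\ge 2(m+1)$, i.e. exactly under the standing assumption with $m=2j-1$, $s=|z|$.
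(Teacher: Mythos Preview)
Your reduction is algebraically wrong, and the error is not cosmetic. If $v = w'' - \lambda w$, then $v'' = w^{(4)} - \lambda w'' = F$; you do \emph{not} get $v'' - \lambda v = F$. Equivalently, the operator factors as $D^4 - \lambda D^2 = D^2(D^2-\lambda)$, not $(D^2-\lambda)^2$, so the characteristic roots are $0,0,\pm\kappa$, not $\pm\kappa,\pm\kappa$. Your plan applies the Helmholtz kernel $\frac{1}{2\kappa}e^{-\kappa|z-t|}$ twice, which would solve $(D^2-\lambda)^2 w = F$; that is a different equation. There is no way to write both intermediate problems as decaying-Helmholtz problems: whichever order you choose, one of the two steps must invert the bare second derivative $D^2$, whose fundamental solutions are $1$ and $x$, neither of which decays at infinity.

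This is precisely the substantive difficulty. The paper does not factor the operator; it builds the fourth-order Green's function directly, and the double root at $0$ produces, in the representation of $w$, a term of the form $\frac{z}{\kappa\lambda}\int_{\kappa z}^\infty F(y/\kappa)\,dy - \frac{1}{\lambda^2}\int_{\kappa z}^\infty y\,F(y/\kappa)\,dy$ with no exponential prefactor. Showing that this combination is nonetheless bounded by $\frac{C_F}{j|\kappa\lambda|}(q+|z|)^{2j}e^{-\mathrm{Re}(\kappa z)}$ requires expanding $(q+|z|+t/|\kappa|)^{2j-1}$ binomially and summing the resulting series, using $q>4j/|\kappa|$ to force $(2j-1)/(|\kappa|(q+|z|)) \le \frac12$ so that the tail is geometric. (The same hypothesis also enters, in the form $2|\kappa|q \ge 2j-1$, through an incomplete-Gamma estimate for the term carrying $e^{\kappa z}$.) Your outline never confronts this term, because your faulty factorization hides the polynomial fundamental solutions. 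If you want to salvage the two-step approach, set $v=w''$, solve $v''-\lambda v = F$ by the Helmholtz kernel, and then integrate twice from infinity to get $w$; but that last double integration will regenerate exactly the polynomial piece above, and you will need essentially the same estimate the paper proves.
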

\begin{proof}
The proof follows very closely the proof of Lemma 7.3.6 in \cite{melenk} and Lemma 4
in \cite{PZMX}. The details appear in the Appendix. 
\end{proof}

\begin{lemma}\label{lem:V}
Let $j \in \mathbb{N}$ and let $V$ be an entire function which satisfies
$$
|V(z)| \leq \frac{C \gamma_V^j}{(j-1) !}\left(q_j+|z|\right)^{2(j-1)} \mathrm{e}^{-\operatorname{Re}(\sqrt{\alpha_0} z)}, \forall z \in \mathbb{C},
$$
for some constants $C, \gamma_V, \alpha_0$, and with $q_j = \frac{4j}{\sqrt{\alpha_0}}$. Then for all 
$n \in \mathbb{N}$, we have
$$
\left|V^{(n)}(z)\right| \leq C \frac{n ! \mathrm{e}^{n+1}}{(n+1)^n} \frac{\gamma_V^j}{(j-1) !} \mathrm{e}^{-\operatorname{Re}(\sqrt{\alpha_0} z)}\left(q_j+\frac{n+1}{\sqrt{\alpha_0}}+|z|\right)^{2(j-1)}, \; z \in \mathbb{C}.
$$
\end{lemma}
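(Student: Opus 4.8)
The plan is to bound the derivatives of $V$ using the Cauchy integral formula on a cleverly chosen circle, exploiting the explicit growth bound on $V$ itself. First I would fix $z \in \mathbb{C}$ and $n \in \mathbb{N}$, and write
\[
V^{(n)}(z) = \frac{n!}{2\pi i}\oint_{|\zeta - z| = R}\frac{V(\zeta)}{(\zeta - z)^{n+1}}\,d\zeta
\]
for a radius $R > 0$ to be optimized. On this circle one has $|\zeta - z| = R$ and $\operatorname{Re}(\sqrt{\alpha_0}\,\zeta) \ge \operatorname{Re}(\sqrt{\alpha_0}\,z) - \sqrt{\alpha_0}R$ (assuming $\alpha_0 > 0$ so $\sqrt{\alpha_0}$ is real and positive; more carefully $|\operatorname{Re}(\sqrt{\alpha_0}(\zeta-z))| \le \sqrt{\alpha_0}R$), and $|\zeta| \le |z| + R$. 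Plugging the hypothesis on $|V|$ into the integral gives
\[
\left|V^{(n)}(z)\right| \le \frac{C\gamma_V^j}{(j-1)!}\,n!\,R^{-n}\,\mathrm{e}^{\sqrt{\alpha_0}R}\,\mathrm{e}^{-\operatorname{Re}(\sqrt{\alpha_0}z)}\left(q_j + |z| + R\right)^{2(j-1)}.
\]

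Next I would choose $R$ to balance the two $R$-dependent factors $R^{-n}$ and $\mathrm{e}^{\sqrt{\alpha_0}R}$. The natural choice is $R = (n+1)/\sqrt{\alpha_0}$, which makes $\mathrm{e}^{\sqrt{\alpha_0}R} = \mathrm{e}^{n+1}$ and $R^{-n} = \left(\sqrt{\alpha_0}/(n+1)\right)^n$. With this substitution, the prefactor $n!\,R^{-n}\mathrm{e}^{\sqrt{\alpha_0}R}$ becomes $n!\,\mathrm{e}^{n+1}\alpha_0^{n/2}/(n+1)^n$, and the polynomial factor becomes $\left(q_j + |z| + (n+1)/\sqrt{\alpha_0}\right)^{2(j-1)}$, matching the claimed right-hand side up to the harmless $\alpha_0^{n/2}$ (which can be absorbed into $\gamma_V$ or is implicitly expected to be $O(1)$; alternatively one sets $R = (n+1)/\sqrt{\alpha_0}$ and notes the statement's $\gamma_V$ is generic). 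One should also not forget the standard factor $\frac{1}{2\pi}\cdot 2\pi R = R$ from the length of the contour times $\frac{1}{2\pi}$, which contributes an extra $R = (n+1)/\sqrt{\alpha_0}$; this too gets absorbed into constants, or the bookkeeping is arranged so the stated form is exactly reproduced — I would check the paper's conventions here.

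The estimate then follows directly. The only genuinely delicate point — and the one I expect to be the main obstacle — is the treatment of $\operatorname{Re}(\sqrt{\alpha_0}\,\zeta)$ on the circle: if $\alpha_0$ is complex (the notation $\kappa,\lambda \in \mathbb{C}^+$ in Proposition~\ref{prop:lem_aux2} suggests complex parameters are in play elsewhere), then $\mathrm{e}^{-\operatorname{Re}(\sqrt{\alpha_0}\zeta)} \le \mathrm{e}^{-\operatorname{Re}(\sqrt{\alpha_0}z)}\mathrm{e}^{|\sqrt{\alpha_0}|R}$, so one should replace $\sqrt{\alpha_0}R$ in the exponent by $|\sqrt{\alpha_0}|R$ and choose $R = (n+1)/|\sqrt{\alpha_0}|$ accordingly; here, however, $\alpha_0 = \alpha(0) > 0$ is real and positive by the standing assumption $\alpha > 0$ on $\bar I$, so this complication does not arise and $\sqrt{\alpha_0} = |\sqrt{\alpha_0}|$. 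A secondary bookkeeping nuisance is making sure the constant $C$ on the right is the same $C$ (or a fixed multiple of it) as on the left and absorbing the factors $2\pi$, the contour length, and $\alpha_0^{n/2}$ cleanly; none of this is mathematically serious. Entireness of $V$ is given by hypothesis, so no separate analytic-continuation argument is needed — the Cauchy formula applies on any circle.
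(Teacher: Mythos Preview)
Your approach is exactly the paper's: apply Cauchy's integral formula for derivatives on the circle of radius $R=(n+1)/\sqrt{\alpha_0}$ centered at $z$, bound $|V(\zeta)|$ by the hypothesis using $|\zeta|\le |z|+R$ and $-\operatorname{Re}(\sqrt{\alpha_0}\zeta)\le -\operatorname{Re}(\sqrt{\alpha_0}z)+\sqrt{\alpha_0}R$, and read off the result. Your observation about the stray factor $(\sqrt{\alpha_0})^{\,n}$ is accurate --- the paper's own displayed conclusion in the proof carries precisely that factor, so the lemma's statement has silently absorbed it into the generic constant (harmlessly, since $\alpha_0=\alpha(0)$ is a fixed positive number independent of $n$ and $j$); your bookkeeping of the contour length is correct as well, since $\tfrac{1}{2\pi}\cdot 2\pi R\cdot R^{-(n+1)}=R^{-n}$.
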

\begin{proof}
We apply Cauchy’s Integral Theorem for derivatives, letting the integration
path be a circle of radius $\frac{n+1}{\sqrt{\alpha_0}}$. Then, for all $n \in \mathbb{N}$, we have
$$
\begin{aligned}
\left|V^{(n)}(z)\right| & \leq \frac{n !}{2 \pi} \oint_D \frac{|V(\zeta)|}{|\zeta-z|^{n+1}} d \zeta \\
& \leq C \frac{(\sqrt{\alpha_0})^n n ! e^{n+1}}{(n+1)^n} \frac{\gamma_V^j}{(j-1) !} \mathrm{e}^{-R e(\sqrt{\alpha_0} z) \mid}\left(q_j+\frac{n+1}{\sqrt{\alpha_0}}+|z|\right)^{2(j-1)}.
\end{aligned}
$$
\end{proof}

\subsection{Main result}\label{sec:main}
We first have the following.
\begin{theorem}\label{thm:main0}
Consider the triple $\left( U_j, \tilde{U}_j, \hat{U}_j \right)$, as defined in Table 1. 
Then, there exist positive constants $a, \gamma_*, C, K_f > 1$, independent of $\varepsilon$, such that for
all $n \in \mathbb{N}$,
\begin{equation}
\label{Ujn}
\left\|U_j^{(n)}\right\|_{\infty,I} \leq C \gamma_*^j \frac{a^{2 j} j^{2 j}}{j !} K^n n !,  \forall j \in \mathbb{N}_0, 
\end{equation}
\begin{eqnarray}
\left|\tilde{U}_j(z)\right| &\leq& C \gamma_*^j \frac{1}{(j-1) !}(a j+|z|)^{2(j-1)} \mathrm{e}^{-\sqrt{\alpha(0)} R e(z)},  \forall z \in \mathbb{C}, j \in \mathbb{N},  \label{Utj} \\
\left|\hat{U}_j(z)\right| &\leq& C \gamma_*^j \frac{1}{(j-1) !}(a j+|z|)^{2(j-1)} \mathrm{e}^{-\sqrt{\alpha(1)} R e(z)},  \forall z \in \mathbb{C}, j \in \mathbb{N} \label{Uhj}.
\end{eqnarray}
\end{theorem}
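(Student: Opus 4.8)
The plan is to prove the three estimates simultaneously by strong induction on $j$, exploiting the coupled structure of the boundary value problems in Table~\ref{table1}. The base cases $j=0,1$ are handled directly: $U_0$ and $U_1$ solve reaction-diffusion BVPs with analytic data and (for $U_0$) homogeneous boundary data, so Proposition~\ref{prop:aux} gives $\|U_0^{(n)}\|_{\infty,I}\lesssim \gamma^n n!$ immediately; $U_1$ has boundary data $-\tilde U_1(0), -\hat U_1(0)$, which will be controlled once the layer terms are in hand. For the layer functions $\tilde U_0,\tilde U_1$ (and analogously $\hat U_0,\hat U_1$), note that $\tilde\alpha_0(\tilde x)=\alpha(0)$ is constant, so the equation $\tilde U_j^{(4)}-\alpha(0)\tilde U_j''=0$ has solutions spanned by $1,\tilde x,e^{\sqrt{\alpha(0)}\tilde x},e^{-\sqrt{\alpha(0)}\tilde x}$; the conditions $\lim_{\tilde x\to\infty}\tilde U_j=0$ and the prescribed $\tilde U_j(0),\tilde U_j'(0)$ force $\tilde U_j$ to be a pure exponential $c_j e^{-\sqrt{\alpha(0)}\tilde x}$, which is entire and satisfies \eqref{Utj} with room to spare.

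For the inductive step, assume \eqref{Ujn}--\eqref{Uhj} hold for all indices below $j$. The strategy is: (i) first bound the layer terms $\tilde U_j,\hat U_j$, then (ii) bound $U_j$. For step (i), the right-hand side $\tilde F_j$ is a finite sum of terms $\tilde\alpha_k(\tilde x)\tilde U_{j-k}''$, $\tilde\alpha_k'(\tilde x)\tilde U_{j-k}'$, $\tilde\beta_k(\tilde x)\tilde U_{j-2-k}$, where $\tilde\alpha_k(\tilde x)=\frac{\alpha^{(k)}(0)}{k!}\tilde x^k$ is a monomial with coefficient controlled by \eqref{analytic}. Using Lemma~\ref{lem:V} to pass from the inductive bounds on $\tilde U_{j-k}$ to bounds on $\tilde U_{j-k}'$ and $\tilde U_{j-k}''$, and Proposition~\ref{prop:MXO} to absorb the polynomial factors $(q+|z|)^{\bullet}\cdot e^{-d\operatorname{Re}(z)}$-type products, one shows that $\tilde F_j$ is entire and satisfies the hypothesis of Proposition~\ref{prop:lem_aux2} with $\kappa=\sqrt{\alpha(0)}$, $\lambda=\alpha(0)$, and an appropriate $C_F$ of the form $C\gamma_*^j/(j-1)!$. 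Here the boundary datum is $g_1=-U_{j-1}'(0)$, whose modulus is bounded by $C\gamma_*^{\,j-1}\frac{a^{2(j-1)}(j-1)^{2(j-1)}}{(j-1)!}K^1\cdot 1!$ from \eqref{Ujn}. Proposition~\ref{prop:lem_aux2} then yields exactly the form \eqref{Utj}, provided the constants $a,\gamma_*$ are chosen large enough that the geometric sum over $k$ and the combinatorial factors $\frac{(j-k)^{2(j-k)}}{(j-k)!}\cdot k!$ versus $\frac{j^{2j}}{j!}$ are absorbed — this requires the standard inequality $\binom{j}{k}k^{2k}(j-k)^{2(j-k)}\lesssim j^{2j}$ together with $\sum_k \gamma_*^{-k}<\infty$. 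Step (ii): $U_j$ solves $-(\alpha U_j')'+\beta U_j=-U_{j-2}^{(4)}$ with boundary data $-\tilde U_j(0),-\hat U_j(0)$; Proposition~\ref{prop:aux} (applied after writing the solution as a particular solution plus a harmonic-type correction carrying the boundary data) gives $\|U_j^{(n)}\|_{\infty,I}$ in terms of the analytic bounds for $U_{j-2}^{(4)}$ — available from the inductive hypothesis on $U_{j-2}$ via \eqref{Ujn} with $n\rightsquigarrow n+4$ — plus $|\tilde U_j(0)|+|\hat U_j(0)|$, which were just bounded in step (i). Again one tunes $a,\gamma_*,K_f$ so the recursion closes.

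The main obstacle is the bookkeeping of constants in the inductive step: one must verify that a single choice of $a,\gamma_*,C,K_f$ works uniformly in $j$ and $n$, which forces careful tracking of how the factorials, the powers $a^{2j}$, and the factors $\gamma_*^j$ propagate through (a) the differentiation Lemma~\ref{lem:V} (which costs a factor $\frac{n!\,e^{n+1}}{(n+1)^n}\approx$ const, harmless), (b) the product estimates of Proposition~\ref{prop:MXO} for $\tilde F_j$, and (c) the solution operator in Proposition~\ref{prop:lem_aux2} (whose output gains a factor $\frac{1}{2j}$, which is what upgrades $(j-1)!$-type denominators correctly). A secondary technical point is that the shift $q_j=\frac{4j}{\sqrt{\alpha_0}}$ in Lemma~\ref{lem:V} grows the polynomial argument from $(aj+|z|)$ to $(aj+\frac{n+1}{\sqrt{\alpha_0}}+|z|)$; one absorbs the extra $\frac{n+1}{\sqrt{\alpha_0}}$ into $aj$ at the cost of enlarging $a$, but this interacts with the requirement $q>\frac{4j}{|\kappa|}$ in Proposition~\ref{prop:lem_aux2}, so the choice of $a$ relative to $\alpha(0),\alpha(1)$ and the $\gamma$'s of \eqref{analytic} must be made consistently from the outset. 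Once these constants are fixed, the argument is a routine — if lengthy — induction.
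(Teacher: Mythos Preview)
Your proposal is correct and follows essentially the same route as the paper's proof: induction on $j$, with the base cases handled by computing $\tilde U_0=\hat U_0=0$ and $\tilde U_1,\hat U_1$ as explicit decaying exponentials, and the inductive step carried out in the order ``first bound $\tilde U_{j},\hat U_{j}$ via Proposition~\ref{prop:lem_aux2} (after estimating $\tilde F_{j}$ with Lemma~\ref{lem:V} and the induction hypothesis, and controlling $g_1=-U_{j-1}'(0)$ by \eqref{Ujn}), then bound $U_{j}$ via Proposition~\ref{prop:aux}.'' The only cosmetic difference is that the paper handles the polynomial absorption in the $\tilde F_{j}$ estimate by the elementary inequalities $\frac{1}{(j-k)!}\le \frac{(j-1)^{k-1}}{(j-1)!}$ rather than by invoking Proposition~\ref{prop:MXO}, and fixes the constants $a,\gamma_*,K$ explicitly at the outset so that the geometric sums and the condition $q>4j/|\kappa|$ of Proposition~\ref{prop:lem_aux2} are verified; your remarks about tuning $a$ relative to $\alpha(0),\alpha(1)$ anticipate exactly this.
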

\begin{proof}
Let $\gamma$ be the constant of Proposition \ref{prop:aux} and let  $\gamma_f$ be given in (\ref{analytic}). We choose
$a, \gamma_*>1$, $\tilde{\gamma}_f > \max \{ \gamma, \gamma_f \}$ and $K=\max \{ 1, e \tilde{\gamma}_f\}$
 to satisfy
$$
\begin{gathered}
a \geq \frac{3}{\min \{\sqrt{\alpha(0)}, \sqrt{\alpha(1)}\}}, \\
\gamma_* \geq \max \left\{\gamma_\alpha, \gamma_{\alpha^{\prime}}, \gamma_\beta\right\}, \\
\frac{1}{\gamma_*}\left(1+\frac{K}{\min \{\sqrt{\alpha(0)}, \sqrt{\alpha(1)}\}}\right) \leq 1, \\
\frac{2}{a^2}\left(\frac{12 K^4}{a^2 \gamma_*^2}+1\right) \leq 1 .
\end{gathered}
$$
We then proceed with an induction argument. By Proposition \ref{prop:aux},
\[
\Vert U_0 \Vert_{\infty, I} \leq C K^n n! ,
\]
hence (\ref{Ujn}) holds for $j=0$. Next we find that $\tilde{U}_0 = \hat{U}_0 = 0$. For $\tilde{U}_1, \hat{U}_1$
we calculate
$$
\tilde{U}_1(\tilde{x})=\frac{U_0^{\prime}(0)}{\sqrt{\alpha(0)}} \mathrm{e}^{-\sqrt{\alpha(0) \tilde{x}}}
$$
and
$$
\hat{U}_1(\hat{x})=\frac{U_0^{\prime}(1)}{\sqrt{\alpha(1)}} \mathrm{e}^{-\sqrt{\alpha(1)} \hat{x}} .
$$
This shows that (\ref{Utj}), (\ref{Uhj}) hold for $j=1$, once we extend $\tilde{U}_1, \hat{U}_1$ to $\mathbb{C}$.
Moreover,
$$
\left|\tilde{U}_1(0)\right|+\left|\hat{U}_1(0)\right| \leq \frac{\left|U_0^{\prime}(0)\right|}{\sqrt{\alpha(0)}}+\frac{\left|U_0^{\prime}(1)\right|}{\sqrt{\alpha(1)}} \leq C K\left(\frac{1}{\sqrt{\alpha(0)}}+\frac{1}{\sqrt{\alpha(1)}}\right),
$$
and by Proposition \ref{prop:aux} 
\[
\Vert U_1 \Vert_{\infty, I} \leq C K^n n!.
\]
We next assume that (\ref{Ujn})--(\ref{Uhj}) hold for $j$ and show them for $j+1$. Since $\tilde{U}_{j+1}$
satisfies (\ref{eq:Utj}), we may use Proposition \ref{prop:lem_aux2}, once we show that 
$$
\left|\tilde{F}_{j+1}(z)\right| \leq C \gamma_*^j e^{-\sqrt{\alpha(0)} \operatorname{Re}(z)} \frac{1}{(j-1) !}\left(t_{j+1}+|z|\right)^{2 j-1},
$$
for some number $t_{j+1}$, depending on $j $. To this end, we recall that
$$
\tilde{F}_j(z)=\sum_{k=1}^{A_j} \alpha_k(z) \tilde{U}_{j-k}^{\prime \prime}(z)+\sum_{k=1}^{A_j} \alpha_k^{\prime}(z) \tilde{U}_{j-k}^{\prime}(z)-\sum_{k=0}^{B_j} \beta_k(z) \tilde{U}_{j-2-k}(z).
$$
By the induction hypothesis we have that, for all $k = 0, \ldots, j - 1$,
$$
\left|\tilde{U}_{j-k}(\tilde{z})\right| \leq C \gamma_*^{j-k} \frac{1}{(j-k-1) !}(a(j-k)+|z|)^{2(j-k-1)} \mathrm{e}^{-\sqrt{\alpha(0)} \operatorname{Re}(z)}.
$$
From Lemma \ref{lem:V} we obtain, using the analyticity of the functions $\alpha, \beta$,
\begin{eqnarray*}
\left|\tilde{F}_{j+1}(z)\right| &\leq& C \mathrm{e}^{-\sqrt{\alpha(0)} \operatorname{Re}(z)}
\sum_{k=1}^{A_{j+1}} \gamma_*^{j+1-k} \left( C_{\alpha} \gamma_\alpha^k + C_{\beta} \gamma_{\alpha'}^k\right)
 |z|^k \frac{1}{(j-k) !}\times \\
&\times& \left( a(j+1-k)+\frac{3}{\sqrt{\alpha(0)}}+|z|\right)^{2(j-k)} + 
C \sum_{k=0}^{B_{j+1}}  \gamma_*^{j-1-k} C_\beta \gamma_\beta^k|z|^k \frac{1}{(j-2-k) !} \times \\
&\times& \left(a(j-1-k)+\frac{3}{\sqrt{\alpha(0)}}+|z|\right)^{2(j-2-k)} \mathrm{e}^{-\sqrt{\alpha(0)} \operatorname{Re}(z)}.
\end{eqnarray*}
The following inequalities will be used in the sequel, and follow by elementary considerations: for $j \ge 2$, we have
$$
\begin{aligned}
\frac{1}{(j-k) !} & \leq \frac{(j-1)^{k-1}}{(j-1) !} \leq \frac{1}{(\mid j-1) !}\left(a j+\frac{3}{\sqrt{\alpha(0)}}+|z|\right)^{k-1}, k \leq j-1, \\
\frac{1}{(j-2-k) !} & \leq \frac{(j-1)^{k+1}}{(j-1) !} \leq \frac{1}{(j-1) !}\left(a j+\frac{3}{\sqrt{\alpha(0)}}+|z|\right)^{k+1}, k \leq j-2,
\end{aligned}
$$
since $a > 1$ and hence, for $k \leq j-1$, there holds
$$
|z|^k \frac{1}{(j-k) !}\left(a(j+1-k)+\frac{3}{\sqrt{\alpha(0)}}+|z|\right)^{2(j-k)} \leq
$$
$$
\leq \frac{1}{(j-1) !}\left(a(j+1-k)+\frac{3}{\sqrt{\alpha(0)}}+|z|\right)^{2 j-1}.
$$
This gives
$$
\begin{aligned}
& \left|\tilde{F}_{j+1}(z)\right| \leq C \gamma_*^j \frac{1}{(j-1) !}\left(a j+\frac{3}{\sqrt{\alpha(0)}}+|z|\right)^{2 j-1} \mathrm{e}^{-\sqrt{\alpha(0)} \operatorname{Re}(z)} \times \\
& \times\left[\sum_{k=1}^{A_{j+1}} C_1 C_\alpha \gamma_\alpha\left(\frac{\gamma_\alpha}{\gamma_*}\right)^{k-1}+\sum_{k=1}^{A_{j+1}} C_2 C_{\alpha^{\prime}} \gamma_{\alpha^{\prime}}\left(\frac{\gamma_{\alpha^{\prime}}}{\gamma_*}\right)^{k-1}++\sum_{k=0}^{B_{j+1}} \frac{C_3 C_\beta}{\gamma_\beta}\left(\frac{\gamma_\beta}{\gamma_*}\right)^k\right] . \\
&
\end{aligned}
$$
The choice of the constants ensures that the sums above are bounded by convergent geometric series, thus
$$
\left|\tilde{F}_{j+1}(z)\right| \leq C \gamma_*^j \frac{1}{(j-1) !}\left(a j+\frac{3}{\sqrt{\alpha(0)}}+|z|\right)^{2 j-1} \mathrm{e}^{-\sqrt{\alpha(0)} R e(z)}.
$$
By Proposition \ref{prop:lem_aux2}, we get
$$
\left|\tilde{U}_{j+1}(z)\right| \leq C\left[\frac{\gamma_*^j}{j} \frac{1}{(j-1) !}\left(a j+\frac{3}{\sqrt{\alpha(0)}}+|z|\right)^{2 j}+\left|\frac{U_j^{\prime}(0)}{\sqrt{\alpha(0)}}\right|\right] \mathrm{e}^{-\sqrt{\alpha(0)} \operatorname{Re}(z)}
$$
We bound the term $\left\vert U'_j(0) \right\vert$ using the induction hypothesis corresponding to (\ref{Ujn}),
and we obtain
\begin{eqnarray*}
\left|\tilde{U}_{j+1}(z)\right| &\leq& C\left[\frac{\gamma_*^j}{j !}\left(a j+\frac{3}{\sqrt{\alpha(0)}}+|z|\right)^{2 j}+\frac{K \gamma_*^j a^{2 j} j^{2 j}}{j ! \sqrt{\alpha(0)}}\right] \mathrm{e}^{-\sqrt{\alpha(0)} \operatorname{Re}(z)}\\
&\leq& C \gamma_*^{j+1} \frac{1}{j !}\left(a j+\frac{3}{\sqrt{\alpha(0)}}+|z|\right)^{2 j}\left[\frac{1}{\gamma}+\frac{K}{\sqrt{\alpha(0)} \gamma_*}\right] \mathrm{e}^{-\sqrt{\alpha(0)} \operatorname{Re}(z)} \\
&\leq& C \frac{\gamma^{j+1}}{j !}(a(j+1)+|z|)^{2 j} \mathrm{e}^{-\sqrt{\alpha(0)} \operatorname{Re}(z)}.
\end{eqnarray*}
This establishes (\ref{Utj}) for $j+1$. In the same manner we may show that (\ref{Uhj}) holds for $j+1$.

We next turn our attention to $U_{j+1}$, which satisfies (\ref{eq:Uj}) and (\ref{bc_ij}).
The induction hypothesis for $j-1$, gives
$$
\left\|U_{j-1}^{(4)}\right\|_{\infty,I} \leq C \gamma_*^{j-1} \frac{a^{2(j-1)}(j-1)^{2(j-1)}}{(j-1) !} K^4 4 !
$$
Proposition \ref{prop:aux} then yields the estimate
\[
\Vert U_{j+1}^{(n)} \Vert_{\infty, I} \leq C \left( | \tilde{U}_{j+1}(0)| + |\tilde{U}_{j+1}(0)| \right) \gamma_*^n n! \quad \forall \; n \in \mathbb{N}_0,
\]
and we have already shown that
$$
\left|\tilde{U}_{j+1}(0)\right| + \left|\hat{U}_{j+1}(0)\right|  \leq C \gamma_*^{j+1} \frac{1}{j !} a^{2 j}(j+1)^{2 j}.
$$
Thus
$$
\begin{aligned}
& \left\|U_{j-1}^{(4)}\right\|_{L^{\infty}(I)}+\left|\tilde{U}_{j+1}(0)\right|+\left|\hat{U}_{j+1}(0)\right| \leq \\
& \quad \leq C\left[4 ! K^4 \gamma_*^{j-1} \frac{1}{(j-1) !} a^{2(j-1)}(j-1)^{2(j-1)}+2 \gamma_*^{j+1} \frac{1}{j !} a^{2 j}(j+1)^{2 j}\right] \\
& \quad \leq C \gamma_*^{j+1} \frac{a^{2(j+1)}(j+1)^{2(j+1)}}{(j+1) !}\left[\frac{4 ! K^4}{a^4 \gamma_*^2}+\frac{2}{a^2}\right] \\
& \quad \leq C \gamma_*^{j+1} \frac{a^{2(j+1)}(j+1)^{2(j+1)}}{(j+1) !} .
\end{aligned}
$$
This gives (\ref{Ujn}) for $j+1$, and the proof is complete.
\end{proof}

The above result allows us to establish the following.

\begin{corollary}\label{cor_BLn}
Let the functions $\tilde{U}_j, \hat{U}_j$ be defined by (\ref{eq:Utj})--(\ref{bc_ij}). Then, there exist constants
$C, \tilde{K}, \hat{K}, \gamma_* > 0$, independent of $j$ and $n$, such that for all $\tilde{x}, \hat{x} > 0$
and $\forall \; n\in \mathbb{N}$ there holds
$$
\left|\tilde{U}_j^{(n)}(\tilde{x})\right| \leq C \tilde{K}^n\left(a^2 \mathrm{e} \gamma\right)^j j^{j-1} \mathrm{e}^{-\frac{\sqrt{\alpha(0)}}{2}|\tilde{x}|}
$$
$$
\left|\hat{U}_j^{(n)}(\hat{x})\right| \leq C \hat{K}^n\left(a^2 \mathrm{e} \gamma\right)^j j^{j-1} \mathrm{e}^{-\frac{\sqrt{\alpha(1)}}{2}|\hat{x}|}
$$
\end{corollary}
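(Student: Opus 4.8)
The plan is to obtain pointwise derivative bounds for $\tilde U_j$ and $\hat U_j$ on the half-line by combining the complex-analytic bound \eqref{Utj} of Theorem \ref{thm:main0} with the Cauchy-type estimate of Lemma \ref{lem:V}, and then to collapse the polynomial factor $(aj+\cdots+|z|)^{2(j-1)}$ into the announced exponential decay using the supremum estimate in Proposition \ref{prop:MXO}. I will carry out the argument for $\tilde U_j$; the case of $\hat U_j$ is identical with $\alpha(0)$ replaced by $\alpha(1)$ and with the hatted constant $\hat K$.

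First I would apply Lemma \ref{lem:V} to $V=\tilde U_j$ (with $\alpha_0=\alpha(0)$, $\gamma_V=\gamma_*$, $q_j=aj$ — note that $a\ge 3/\sqrt{\alpha(0)}$ guarantees $aj\ge q_j$ in the sense needed, so \eqref{Utj} has exactly the form required by the hypothesis of Lemma \ref{lem:V} up to the innocuous replacement of $\sqrt{\alpha_0}$-normalised radius). This yields, for real $\tilde x>0$,
\begin{equation*}
\bigl|\tilde U_j^{(n)}(\tilde x)\bigr| \le C\,\frac{n!\,\mathrm{e}^{n+1}}{(n+1)^n}\,\frac{\gamma_*^{\,j}}{(j-1)!}\,\mathrm{e}^{-\sqrt{\alpha(0)}\,\tilde x}\Bigl(aj+\tfrac{n+1}{\sqrt{\alpha(0)}}+\tilde x\Bigr)^{2(j-1)}.
\end{equation*}
Since $n!\,\mathrm{e}^{n+1}/(n+1)^n$ is bounded by $C\,\mathrm{e}^{n}$ (Stirling), the $n$-dependence in the prefactor is at worst geometric; I will absorb it, together with the extra $n$-dependence picked up below from the polynomial factor, into a single constant $\tilde K^n$.

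Next I would split the exponential as $\mathrm{e}^{-\sqrt{\alpha(0)}\tilde x}=\mathrm{e}^{-\frac{\sqrt{\alpha(0)}}{2}\tilde x}\cdot\mathrm{e}^{-\frac{\sqrt{\alpha(0)}}{2}\tilde x}$ and use one half to kill the polynomial. Using first $(A+B)^{2(j-1)}\le 2^{2(j-1)}(A^{2(j-1)}+B^{2(j-1)})$ to separate the $\tilde x$-part from the rest, the term $\mathrm{e}^{-\frac{\sqrt{\alpha(0)}}{2}\tilde x}\tilde x^{2(j-1)}$ is controlled by the second estimate of Proposition \ref{prop:MXO} with $\rho=\tilde x$, $n=2(j-1)$, $d=2\sqrt{\alpha(0)}$, giving a bound of the form $C\,(c\,(j-1))^{2(j-1)}$ for a constant $c$ depending only on $\alpha(0)$; the remaining factor $\bigl(aj+\frac{n+1}{\sqrt{\alpha(0)}}\bigr)^{2(j-1)}$ is bounded by $\tilde K^n (a j)^{2(j-1)}\cdot 2^{2(j-1)}$ after another splitting, using $aj\ge 3j/\sqrt{\alpha(0)}\ge (n+1)/\sqrt{\alpha(0)}$ only when $n+1\le 3j$ — for larger $n$ one simply absorbs the $(n+1)^{2(j-1)}\le \tilde K^n$ into the $n$-geometric factor, which is legitimate since $2(j-1)\le n\cdot\frac{2(j-1)}{?}$ is \emph{not} automatic, so here one instead keeps $j^{2(j-1)}$ and bounds $\left(\tfrac{n+1}{\sqrt{\alpha(0)}}\right)^{2(j-1)}$ by $\left(\tfrac{(n+1)}{\sqrt{\alpha(0)}\,j}\right)^{2(j-1)} j^{2(j-1)}$ and notes $x\mapsto (x)^{2(j-1)}$ with $x=(n+1)/(\sqrt{\alpha(0)}j)$ is at most $\mathrm{e}^{(n+1)\cdot\frac{2(j-1)}{?}}$ — cleaner is to observe $t^{2(j-1)}\le (2(j-1))!\,\mathrm{e}^{t}\le \tilde K^n$ for $t=(n+1)/\sqrt{\alpha(0)}$ after dividing by $j^{2(j-1)}/(j-1)!$, which is the standard trick. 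Collecting: the surviving $j$-dependence is $\frac{\gamma_*^{\,j}}{(j-1)!}\,(aj)^{2(j-1)}\,(c(j-1))^{2(j-1)}$ — wait, that is too much; the correct bookkeeping keeps only one factor of $(c'j)^{2(j-1)}$ because the $\tilde x$-polynomial and the constant-polynomial are the two summands of a single split, not multiplied. So the $j$-part is $\frac{\gamma_*^{\,j}}{(j-1)!}\,(c'a^2 j)^{2(j-1)}$, and using $(j-1)!\ge (j-1)^{j-1}\mathrm{e}^{-(j-1)}$ (Stirling lower bound) this is $\le C\,(a^2\mathrm{e}\,\gamma)^j\,j^{\,j-1}$ after renaming the constant $\gamma$ as in the statement (indeed $\frac{j^{2(j-1)}}{(j-1)!}\le \mathrm{e}^{j-1} j^{j-1}\cdot\frac{j^{j-1}}{(j-1)^{j-1}}\le C\,\mathrm{e}^{2j} j^{j-1}$). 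This gives exactly
\begin{equation*}
\bigl|\tilde U_j^{(n)}(\tilde x)\bigr|\le C\,\tilde K^n\,(a^2\mathrm{e}\,\gamma)^j\,j^{\,j-1}\,\mathrm{e}^{-\frac{\sqrt{\alpha(0)}}{2}|\tilde x|},
\end{equation*}
and the $\hat U_j$ bound follows verbatim with $\alpha(0)\rightsquigarrow\alpha(1)$, $\tilde K\rightsquigarrow\hat K$.

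The main obstacle is the bookkeeping of constants: one must verify that every time a factor $2^{2(j-1)}$, $(n+1)^{2(j-1)}$ or $((j-1))^{2(j-1)}$ is produced by a splitting inequality, it can be charged either to the single geometric base $(a^2\mathrm{e}\gamma)^j$ (for $j$-powers) or to $\tilde K^n$ (for $n$-powers) \emph{without double-counting}, and in particular that the conversion of $(n+1)^{2(j-1)}$ into $\tilde K^n\cdot(\text{something}\le (a^2\mathrm e\gamma)^j j^{j-1})$ goes through — this is where Proposition \ref{prop:MXO}'s second estimate (applied with $\rho=n+1$ if needed, or the elementary $t^{m}\le m!\,\mathrm e^{t}$) is essential. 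Once the constant $\gamma$ is fixed to dominate the product of all these splitting constants together with $\gamma_*$, the estimate closes, uniformly in $j$ and $n$ and independently of $\varepsilon$, as required.
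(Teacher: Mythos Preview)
Your approach is exactly the paper's: apply Lemma~\ref{lem:V} to the entire-function bound \eqref{Utj}, then use Proposition~\ref{prop:MXO} to absorb the $\tilde x$-polynomial into half of the exponential, and finish with Stirling on $j^{2(j-1)}/(j-1)!$. Where you get tangled --- the handling of the $(n+1)$-dependence inside $\bigl(aj+\tfrac{n+1}{\sqrt{\alpha(0)}}\bigr)^{2(j-1)}$ --- the paper resolves in one clean line: factor out $(aj)^{2(j-1)}$ and use the elementary bound $(1+t/m)^m\le e^t$ with $m=2(j-1)$ to get
\[
\Bigl(1+\tfrac{n+1}{a\sqrt{\alpha(0)}\,j}\Bigr)^{2(j-1)}
\le \Bigl(1+\tfrac{2(n+1)}{2a\sqrt{\alpha(0)}(j-1)}\Bigr)^{2(j-1)}
\le \exp\!\Bigl(\tfrac{2(n+1)}{a\sqrt{\alpha(0)}}\Bigr),
\]
which is a pure $\tilde K^n$ factor with \emph{no} residual $j$-dependence and no case analysis. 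Your suggested alternatives (splitting on $n+1\lessgtr 3j$, or invoking $t^m\le m!\,e^t$) can in principle be pushed through, but they generate extra factorial or power factors in $j$ that then have to be reabsorbed, which is exactly the bookkeeping you found yourself second-guessing; the $(1+t/m)^m\le e^t$ trick sidesteps all of it.
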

\begin{proof}
We will only show the result for $\tilde{U}_j$ since $\hat{U}_j$ is similar. We have already shown that $\tilde{U}_j$ is
entire and by Proposition \ref{lem:V} we get, for $z \in \mathbb{C}$,
$$
\left|\tilde{U}_j^{(n)}(z)\right| \leq C \frac{(\sqrt{\alpha(0)})^n n^n \mathrm{e}^{n+1}}{(n+1)^n} \gamma_*^j \mathrm{e}^{-\operatorname{Re}(\sqrt{\alpha(0)} z)} \frac{1}{(j-1) !}\left(a j+\frac{n+1}{\sqrt{\alpha(0)}}+|z|\right)^{2(j-1)} .
$$
By Proposition \ref{prop:MXO}, we have for $\tilde{x} > 0$,
\begin{eqnarray*}
\left(a j+\frac{n+1}{\sqrt{\alpha(0)}}+\tilde{x}\right)^{2(j-1)} \mathrm{e}^{-\sqrt{\alpha(0)}|\tilde{x}|} &\leq& 
C\left(a j+\frac{n+1}{\sqrt{\alpha(0)}}\right)^{2(j-1)} \mathrm{e}^{-\frac{\sqrt{\alpha(0)}}{2}|\tilde{x}|}
\end{eqnarray*}
$$
\begin{aligned}
& \leq C(a j)^{2(j-1)}\left(1+\frac{(n+1)}{\sqrt{\alpha(0)} a j}\right)^{2(j-1)} \mathrm{e}^{-\frac{\sqrt{\alpha(0)}}{2}}|\tilde{x}| \\
& \leq C(a j)^{2(j-1)}\left(1+\frac{2(n+1)}{2 a \sqrt{\alpha(0)}(j-1)}\right)^{2(j-1)} \mathrm{e}^{-\frac{\sqrt{\alpha(0)}}{2}|\tilde{x}|} \\
& \leq C(a j)^{2(j-1)} \mathrm{e}^{\frac{2(n+1)}{a \sqrt{\alpha(0)}}} \mathrm{e}^{-\frac{\sqrt{\alpha(0)}}{2}}|\tilde{x}|, 
\end{aligned}
$$
which yields, with the help of Stirling's formula,
$$
\begin{aligned}
& \frac{1}{(j-1) !}\left(a j+\frac{n+1}{\sqrt{\alpha(0)}}+\tilde{x}\right)^{2(j-1)} \mathrm{e}^{-\sqrt{\alpha(0)}|\tilde{x}|} \leq \\
& \leq C a^{2(j-1)} \frac{j^{2(j-1)}}{(j-1) !} \mathrm{e}^{\frac{2(n+1)}{a \sqrt{\alpha(0)}}} \mathrm{e}^{-\frac{\sqrt{\alpha(0)}}{2}}|\tilde{x}| \\
& \leq C\left(a^2 \mathrm{e}\right)^{j-1} j^{j-1} \mathrm{e}^{\frac{2(n+1)}{a \sqrt{\alpha(0)}}} \mathrm{e}^{-\frac{\sqrt{\alpha(0)}}{2}}|\tilde{x}| .
\end{aligned}
$$
Therefore,
$$
\begin{aligned}
\left\vert \tilde{U}^{(n)}_j (z) \right\vert \leq C \frac{\mathrm{e} \sqrt{\alpha(0)}}{a^2} \mathrm{e}^{\frac{2(n+1)}{a \sqrt{\alpha(0)}}}\left(\frac{n \mathrm{e}}{n+1}\right)^n\left(a^2 \mathrm{e} \gamma\right)^j j^{j-1} \mathrm{e}^{-\frac{\sqrt{\alpha(0)}}{2}}|\tilde{x}|,
\end{aligned}
$$
and this completes the proof.
\end{proof}

We are finally in the position to prove our main result. The theorem below
tells us that the solution has an analytic character, the boundary layers do not
affect the solution in areas away from the boundary, and the remainder is exponentially
small.

\begin{theorem}\label{thm:main}
Assume (\ref{analytic}) holds and $u$ be the solution of (\ref{bvp}). Then there exist positive constants
$C, K, K_1, q$, independent of $\varepsilon$, such  that 
\[
u = u_M^S + \tilde{u}_M^{BL} + \hat{u}_M^{BL} + r_M,
\]
with the following being true $\forall \; n \in \mathbb{N}$ and $\forall \; x \in \overline{I}$:
$$
\left\|\left(u_M^S\right)^{(n)}\right\|_{0, I} \leq C K^n n !, 
$$
$$
\left|\left(\tilde{u}_M^{B L}\right)^{(n)}(x)\right|+\left|\left(\hat{u}_M^{B L}\right)^{(n)}(x)\right| \leq C K_1^n \varepsilon^{1-n} \mathrm{e}^{-\min \left\{\frac{\sqrt{\alpha(0)}}{2}, \frac{\sqrt{\alpha(1)}}{2}\right\} \operatorname{dist}(x, \partial I) / \varepsilon},
$$
$$
\left\|r_M\right\|_{L^{\infty}(\partial I)}+\left\|r_M^{\prime}\right\|_{L^{\infty}(\partial I)}+\left\|r_M\right\|_{0, I}
+ \varepsilon \Vert r'_M \Vert_{0,I} \leq C \mathrm{e}^{-q / \varepsilon},
$$
where $M$ is chosen so that $a^2 \varepsilon \mathrm{e} \gamma_* (M+1) < 1$, where $a$ and $\gamma_*$ are 
given in Theorem \ref{thm:main0}.
\end{theorem}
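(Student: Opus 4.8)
The plan is to estimate the four pieces of the decomposition separately, using Theorem~\ref{thm:main0} and Corollary~\ref{cor_BLn} for the first three and a residual-plus-stability argument for $r_M$. As in the statement I take $M$ with $a^{2}\varepsilon\mathrm{e}\gamma_{*}(M+1)<1$; since exponential decay of the remainder forces $M$ to be of order $\varepsilon^{-1}$ with a little room to spare, I would in fact fix a small constant $\beta\in\bigl(0,(a^{2}\mathrm{e}\gamma_{*})^{-1}\bigr)$ and set $M=\lfloor\beta/\varepsilon\rfloor$, so that $\theta:=a^{2}\mathrm{e}\gamma_{*}\beta\in(0,1)$ and $\varepsilon a^{2}\mathrm{e}\gamma_{*}j\le\theta$ for all $j\le M$.

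\textbf{Smooth part and layers.} Since $|I|=1$ we have $\|(u_M^S)^{(n)}\|_{0,I}\le\sum_{j=0}^{M}\varepsilon^{j}\|U_j^{(n)}\|_{\infty,I}$, and inserting \eqref{Ujn} together with the elementary bound $j^{2j}/j!\le(\mathrm{e}j)^{j}$ gives $\|(u_M^S)^{(n)}\|_{0,I}\le CK^{n}n!\sum_{j=0}^{M}(\varepsilon a^{2}\mathrm{e}\gamma_{*}j)^{j}\le CK^{n}n!\sum_{j\ge0}\theta^{j}$, the claimed estimate. For the layers, under the stretching $\frac{d^{n}}{dx^{n}}=\varepsilon^{-n}\frac{d^{n}}{d\tilde x^{n}}$, and since $\tilde U_0=\hat U_0=0$ the sums effectively start at $j=1$, which is exactly what converts $\varepsilon^{-n}$ into $\varepsilon^{1-n}$. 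Applying Corollary~\ref{cor_BLn} and summing the same geometric-type series yields $|(\tilde u_M^{BL})^{(n)}(x)|\le C\tilde K^{n}\varepsilon^{1-n}\mathrm{e}^{-\frac{\sqrt{\alpha(0)}}{2}x/\varepsilon}$ and the analogue at $x=1$; one then uses $\operatorname{dist}(x,\partial I)\le x$ (resp. $\le 1-x$), replaces $\sqrt{\alpha(0)},\sqrt{\alpha(1)}$ by their minimum, and adds the two bounds with $K_1=\max\{\tilde K,\hat K\}$.

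\textbf{The remainder.} By linearity $L_\varepsilon r_M=g$ on $I$, where $L_\varepsilon w:=\varepsilon^{2}w^{(4)}-(\alpha w')'+\beta w$ and $g:=f-L_\varepsilon(u_M^S+\tilde u_M^{BL}+\hat u_M^{BL})$. A direct computation from \eqref{eq:Uj} gives $L_\varepsilon u_M^S=f+\varepsilon^{M+1}U_{M-1}^{(4)}+\varepsilon^{M+2}U_M^{(4)}$, and, by the matching of powers of $\varepsilon$ in \eqref{eq:Utj}, $\varepsilon^{2}L_\varepsilon\tilde u_M^{BL}$ contains only terms of $\varepsilon$-order $\ge M+1$, built from the Taylor coefficients $\tilde\alpha_k,\tilde\alpha_k',\tilde\beta_k$ ($k\ge1$) and the $\tilde U_i^{(\cdot)}$ ($i\le M$); likewise for $\hat u_M^{BL}$. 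Using Theorem~\ref{thm:main0}, Corollary~\ref{cor_BLn}, Stirling's formula and \eqref{analytic}, each such contribution is $\lesssim\varepsilon^{-c}\theta^{M}$ for some fixed $c$, hence $\lesssim\mathrm{e}^{-q/\varepsilon}$. The delicate point is the Taylor tail $\sum_{k>M-i}\varepsilon^{k}\tilde\alpha_k(x/\varepsilon)=\sum_{k>M-i}\alpha^{(k)}(0)x^{k}/k!$, which converges only for $\gamma_\alpha x<1$; I would therefore split $I$ at a fixed $\delta\in(0,\tfrac12)$ with $\gamma_\alpha\delta<\tfrac12$: on $\{x\ge\delta\}$ the layer functions and all their derivatives are already $\mathrm{e}^{-c/\varepsilon}$-small (and $\alpha,\beta$ merely bounded there), while on $\{x<\delta\}$ the tail is dominated by $2^{-(M-i)}$, which after multiplication by $\tilde U_i^{(\cdot)}$ and summation over $i\le M$ is again $O(\mathrm{e}^{-q/\varepsilon})$. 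In the same way one reads off the boundary data of $r_M$ from \eqref{bc_ij}: the outer and left-layer parts cancel at $x=0$, leaving $r_M(0)=-\hat u_M^{BL}(0)$ and $r_M'(0)=-\varepsilon^{M}U_M'(0)+\sum_{j=0}^{M}\varepsilon^{j-1}\hat U_j'(1/\varepsilon)$, and symmetrically at $x=1$, each $\lesssim\mathrm{e}^{-q/\varepsilon}$ by \eqref{Ujn}, \eqref{Uhj} (using the $\rho^{n}\mathrm{e}^{-d\rho/4}$ estimate of Proposition~\ref{prop:MXO} to absorb the polynomial growth of $\hat U_j$) and Corollary~\ref{cor_BLn}. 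This already yields the $\|r_M\|_{L^{\infty}(\partial I)}+\|r_M'\|_{L^{\infty}(\partial I)}$ part of the claim.

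\textbf{Stability and conclusion.} Finally I would transfer exponential smallness of $g$ and of the boundary data to the $L^{2}$-bound. Lift the (exponentially small) boundary data by the cubic Hermite interpolant $\ell_M$, so that $\ell_M^{(4)}\equiv0$ and $\|\ell_M\|_{C^{3}(\bar I)}\lesssim\mathrm{e}^{-q/\varepsilon}$, and set $\rho_M:=r_M-\ell_M\in H_0^{2}(I)$, which solves $L_\varepsilon\rho_M=g-L_\varepsilon\ell_M=:\tilde g$ with $\|\tilde g\|_{0,I}\lesssim\mathrm{e}^{-q/\varepsilon}$. Testing against $\rho_M$ and integrating by parts (the boundary terms vanish since $\rho_M\in H_0^{2}$) gives $\varepsilon^{2}\|\rho_M''\|_{0,I}^{2}+\langle\alpha\rho_M',\rho_M'\rangle+\langle\beta\rho_M,\rho_M\rangle=\langle\tilde g,\rho_M\rangle$; since $\alpha\ge\alpha_{\min}>0$, $\beta\ge0$ and $\|\rho_M\|_{0,I}\lesssim\|\rho_M'\|_{0,I}$ by Poincaré's inequality, this forces $\|\rho_M\|_{0,I}+\|\rho_M'\|_{0,I}+\varepsilon\|\rho_M''\|_{0,I}\lesssim\|\tilde g\|_{0,I}\lesssim\mathrm{e}^{-q/\varepsilon}$, and adding $\ell_M$ back yields $\|r_M\|_{0,I}+\varepsilon\|r_M'\|_{0,I}\lesssim\mathrm{e}^{-q/\varepsilon}$, which after renaming $q$ completes the proof. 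I expect the main obstacle to be the bookkeeping of the third step: making precise that the construction \eqref{eq:Uj}--\eqref{bc_ij} annihilates all $\varepsilon$-powers $\le M$ in $L_\varepsilon(u_M^S+\tilde u_M^{BL}+\hat u_M^{BL})-f$, and taming the Taylor tails of the analytic data through the near/far splitting of $I$, so that with $M\sim\beta/\varepsilon$ every surviving residual and boundary term is genuinely $O(\mathrm{e}^{-q/\varepsilon})$; the energy estimate itself is routine once the data is known to be exponentially small.
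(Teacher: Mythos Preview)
Your proposal is correct and follows essentially the same route as the paper: bound $u_M^S$ and the layers by summing the geometric-type series coming from Theorem~\ref{thm:main0} and Corollary~\ref{cor_BLn}, then show $Lr_M$ and the boundary data of $r_M$ are $O(\mathrm{e}^{-q/\varepsilon})$ and conclude by stability. Two points where you are actually more explicit than the paper are worth flagging: (i) the paper's computation of $L(\tilde u_M^{BL})$ tacitly replaces $\alpha,\beta$ by their Taylor polynomials about $0$ without discussing the tail, whereas your near/far splitting at $x=\delta$ is exactly the clean way to justify this; (ii) the paper ends with the one-line ``by stability we have the desired result,'' while your boundary-lifting plus energy argument (test $\rho_M=r_M-\ell_M\in H_0^2(I)$ against itself, use $\alpha\ge\alpha_{\min}>0$ and Poincar\'e) is the concrete mechanism behind that sentence.
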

\begin{proof}
We have from Theorem \ref{thm:main0},
$$
\begin{aligned}
\left\|\left(u_M^S\right)^{(n)}\right\|_{0, I} & \leq \sum_{j=0}^M \varepsilon^j\left\|U_j^{(n)}\right\|_{0, I} \leq C \sum_{j=0}^M \varepsilon^j \gamma_*^j \frac{a^{2 j} j^{2 j}}{j !} K^n n ! \\
& \leq C K^n n ! \sum_{j=0}^M(\varepsilon \gamma_*)^j \frac{a^{2 j} j^{2 j}}{j !} \leq C K^n n ! \sum_{j=0}^M\left(a^2 \varepsilon \mathrm{e} \gamma_* j\right)^j \\
& \leq C K^n n ! \sum_{j=0}^{\infty}\left(a^2 \varepsilon \mathrm{e} \gamma_* M\right)^j \\
& \leq C K^n n ! 
\end{aligned}
$$
where we used the fact that the sum is a convergent geometric series, due to the assumption $a^2 \varepsilon \mathrm{e} \gamma_* (M+1) < 1$. Similarly, from Corollary \ref{cor_BLn},
$$
\begin{aligned}
& \left|\left(\tilde{u}_M^{B L}\right)^{(n)}(\tilde{x})\right| \leq \sum_{j=0}^{M+1} \varepsilon^j\left|\tilde{U}_j^{(n)}(\tilde{x})\right| \leq \sum_{j=1}^{M+1} \varepsilon^j \mathrm{e}^{-\frac{\sqrt{\alpha(0)}}{2}}|\tilde{x}| \tilde{K}^n\left(a^2 \mathrm{e} \gamma_*\right)^j j^{j-1} \\
& \leq \varepsilon \gamma_* \tilde{K}^n \mathrm{e}^{-\frac{\sqrt{\alpha(0)}}{2}}|\tilde{x}| \sum_{j=1}^{M+1}\left(a^2 \varepsilon \mathrm{e} \gamma_* (M+1)\right)^{j-1} \\
& \leq C \varepsilon \tilde{K}^n \mathrm{e}^{-\frac{\sqrt{\alpha(0)}}{2}} \tilde{x} \mid \sum_{j=0}^{\infty}\left(a^2 \varepsilon \mathrm{e} \gamma_* (M+1)\right)^j \\
& \leq C \varepsilon \tilde{K}^n \mathrm{e}^{-\frac{\sqrt{\alpha(0)}}{2}|\tilde{x}|} .
\end{aligned}
$$
In the same manner one can infer a similar result for $\hat{u}_M^{BL}$. 

Finally, we note that
$$
\begin{aligned}
r_M(0) & =u(0)-\left(\sum_{j=0}^M \varepsilon^j U_j(0)+\sum_{j=0}^{M+1} \varepsilon^j \tilde{U}_j(0)+\sum_{j=0}^{M+1} \varepsilon^j \hat{U}_j(1 / \varepsilon)\right) \\
& =-\varepsilon^{M+1} \tilde{U}_M(0)-\sum_{j=0}^{M+1} \varepsilon^j \hat{U}_j(1 / \varepsilon),
\end{aligned}
$$
since $u(0)=0, \tilde{U}_j(0) + \hat{U}_j(0) = 0$ for $j\ge 1$. Hence, using Theorem \ref{thm:main0} we have
$$
\begin{aligned}
\left|r_M(0)\right| & \leq \varepsilon^{M+1}\left|\tilde{U}_{M+1}(0)\right|+\sum_{j=1}^{M+1} \varepsilon^j\left|\hat{U}_j(1 / \varepsilon)\right| \\
& \leq C\left(a^2 \varepsilon \mathrm{e} \gamma_*\right)^{M+1}(M+1)^M+\sum_{j=1}^{M+1} \varepsilon^j C \mathrm{e}^{-\frac{\sqrt{\alpha(1)}}{2 \varepsilon}} \gamma_*^j j^{j-1} \\
& \leq C\left(a^2 \varepsilon \mathrm{e} \gamma_*\right)^{M+1}(M+1)^M+C \varepsilon \gamma_* \mathrm{e}^{-\frac{\sqrt{\alpha(1)}}{2 \varepsilon}} \sum_{j=1}^{M+1}\left(a^2 \varepsilon \mathrm{e} \gamma_*(M+1)\right)^{j-1} \\
& \leq C\left(a^2 \varepsilon \mathrm{e} \gamma_*\right)^{M+1}(M+1)^M+C \varepsilon \gamma_* \mathrm{e}^{-\frac{\sqrt{\alpha(1)}}{2 \varepsilon}} \sum_{j=0}^{\infty}\left(a^2 \varepsilon \mathrm{e} \gamma_*(M+1)\right)^j \\
& \leq C \varepsilon \gamma_*\left(a^2 \varepsilon \gamma_*(M+1)\right)^M+C \varepsilon \mathrm{e}^{-q \frac{\sqrt{\alpha(1)}}{2 \varepsilon}}
\end{aligned}
$$
for some positive contant $q$, independent of $\varepsilon$. Furthermore, 
$$
\begin{aligned}
r_M^{\prime}(0) & =u^{\prime}(0)-\left(\sum_{j=0}^M \varepsilon^j U_j^{\prime}(0)+\sum_{j=1}^{M+1} \varepsilon^{j-1} \tilde{U}_j^{\prime}(0)-\sum_{j=1}^{M+1} \varepsilon^{j-1} \hat{U}_j^{\prime}(1 / \varepsilon)\right) \\
& =-\sum_{j=1}^{M+1} \varepsilon^{j-1} \hat{U}_j^{\prime}(1 / \varepsilon),
\end{aligned}
$$
since $u^{\prime}(0)=\tilde{U}_0^{\prime}(0)=\hat{U}_0^{\prime}(0)=0, \tilde{U}_j^{\prime}(0)+U_{j-1}^{\prime}(0)=0$. Thus,
$$
\left|r_M^{\prime}(0)\right| \leq \sum_{j=1}^{M+1} \varepsilon^{j-1}\left|\hat{U}_j^{\prime}(1 / \varepsilon)\right| 
\leq C \hat{K} \mathrm{e}^{-q \sqrt{\alpha(1)} / \varepsilon},
$$
where Corollary \ref{cor_BLn}  was used. In the same way we obtain analogous results for $|r_M(1)|, |r'_M(1)|$.

We now apply the operator $
L:=\varepsilon^2 \frac{d^4}{d x^4}-\frac{d}{d x}\left(\alpha(x) \frac{d^2}{d x^2}\right)+\beta(x)
$ to the function $u-u^S_M$ and obtain,
$$
\begin{aligned}
L\left(u-u_M^s\right) & =f-\sum_{j=0}^M \varepsilon^j L\left(U_j\right) \\
& =f-\sum_{j=0}^M \varepsilon^j\left(\varepsilon^2 U_j^{(4)}-\left(\alpha(x) U_j^{\prime}\right)^{\prime}+\beta(x) U_j\right) \\
& =-\varepsilon^{M+1} U_{M-1}^{(4)}-\varepsilon^{M+2} U_M^{(4)},
\end{aligned}
$$
since $\{ U_j \}_{j \in \mathbb{N}_0}$ satisfies (\ref{eq:Uj}). By Theorem \ref{thm:main0} we get
$$
\begin{aligned}
& \left\|L\left(u-u_M^s\right)\right\|_{L^{\infty}(I)} \leq \\
\leq & C 4 ! K^4\left(\varepsilon^{M+1} \gamma^{M-1} \frac{a^{2(M-1)}(M-1)^{2(M-1)}}{(M-1) !}+\varepsilon^{M+2} \gamma^M \frac{a^{2 M} M^{2 M}}{M !}\right) \\
\leq & C K^4 \varepsilon^M \gamma^M \frac{a^{2 M} M^{2 M}}{M !}\left[\frac{1}{\gamma a^2}+\varepsilon\right] \\
\leq & C \varepsilon^M \gamma^M \frac{a^{2 M} M^M}{M !}.
\end{aligned}
$$
Using Stirling’s approximation we obtain
$$
\left\|L\left(u-u_M^s\right)\right\|_{L^{\infty}(I)} \leq C\left(a^2 \mathrm{e} \varepsilon \gamma M\right)^M.
$$

We next derive an estimate for $L \tilde{u}_M^{BL}$ :
$$
\begin{gathered}
L\left(\tilde{u}_M^{B L}\right)=\sum_{j=0}^{M+1} \varepsilon^j\left[\varepsilon^{-2}\left(\tilde{U}_j^{(4)}(\tilde{x})-\sum_{k=0}^j \varepsilon^k \alpha_k(\tilde{x}) \tilde{U}_{j-k}^{\prime \prime}(\tilde{x})-\sum_{k=0}^j \varepsilon^k \alpha_k^{\prime}(\tilde{x}) \tilde{U}_{j-k}^{\prime}(\tilde{x})\right)\right. \\
\left.+\sum_{k=0}^{j-2} \varepsilon^k \beta_k(\tilde{x}) \tilde{U}_{j-2-k}(\tilde{x})\right] \\
=\sum_{\substack{1 \leq k \leq j \leq M+1\\ j-2+k>M-1}}-\varepsilon^{j+k-2}\left(\alpha_k(\tilde{x}) \tilde{U}_{j-k}^{\prime \prime}(\tilde{x})+\alpha_k^{\prime}(\tilde{x}) \tilde{U}_{j-k}^{\prime}(\tilde{x})\right) \\
+\sum_{\substack{0 \leq k \leq j-2 \leq M+1\\ j+k>M-1}} \varepsilon^{k+j} \beta_k(\tilde{x}) \tilde{U}_{j-2-k}(\tilde{x}) .
\end{gathered}
$$
We utilize the analyticity of $\alpha$ and $\beta$  along with the estimate of Theorem \ref{thm:main0}, and with the aid
of Lemma \ref{lem:V} we obtain
$$
\begin{aligned}
\left\|L\left(\tilde{u}_M^{B L}\right)\right\|_{L^{\infty}(I)} \leq & \varepsilon^M \sum_{\substack{1 \leq k \leq j \leq M+1\\ j-2+k>M-1}}\left\{\left|\alpha_k(\tilde{x}) \tilde{U}_{j-k}^{\prime \prime}(\tilde{x})\right|+\left|\alpha_k^{\prime}(\tilde{x}) \tilde{U}_{j-k}^{\prime}(\tilde{x})\right|\right\} \\
& +\varepsilon^M \sum_{\substack{0 \leq k \leq j-2 \leq M+1\\ j+k>M-1}}\left|\beta_k(\tilde{x}) \tilde{U}_{j-2-k}(\tilde{x})\right|
\end{aligned}
$$
$$
\begin{aligned}
& \leq \varepsilon^M \sum_{\substack{1 \leq k \leq j \leq M+1\\ j-2+k>M-1}} C\left\{C_\alpha \gamma_\alpha^k|\tilde{x}|^k \gamma^{j-k} \frac{1}{(j-k-1) !}\left(a(j-k)+\frac{3}{\sqrt{\alpha(0)}}+|\tilde{x}|\right)^{2(j-k-1)}+\right. \\
& \left.\quad+C_{\alpha^{\prime}} \gamma_{\alpha^{\prime}}^k|\tilde{x}|^k \gamma^{j-k} \frac{1}{(j-k-1) !}\left(a(j-k)+\frac{2}{\sqrt{\alpha(0)}}+|\tilde{x}|\right)^{2(j-k-1)}\right\} \mathrm{e}^{-\sqrt{\alpha(0) \tilde{x}}}+ \\
& \quad+\varepsilon^M \sum_{\substack{0 \leq k \leq j-2 \leq M+1\\ j+k>M-1}} C_\beta \gamma_\beta^k|\tilde{x}|^k \gamma^{j-2-k} \frac{1}{(j-k-3) !}(a(j-2-k)+|\tilde{x}|)^{2(j-k-3)} \mathrm{e}^{-\sqrt{\alpha(0) \tilde{x}}} .
\end{aligned}
$$
Note that, for $j \ge 2$, there holds
$$
\begin{aligned}
& |\tilde{x}|^k \frac{1}{(j-k-1) !}\left(a(j+1-k)+\frac{3}{\sqrt{\alpha(0)}}+|\tilde{x}|\right)^{2(j-k-1)} \leq \frac{1}{(j-1) !}\left(a_j+\frac{3}{\sqrt{\alpha(0)}}+|\tilde{x}|\right)^{2(j-1)}, \\
& |\tilde{x}|^k \frac{1}{(j-k-3) !}\left(a(j-1-k)+\frac{3}{\sqrt{\alpha(0)}}+|\tilde{x}|\right)^{2(j-3-k)} \leq \frac{1}{(j-1) !}\left(a j+\frac{3}{\sqrt{\alpha(0)}}+|\tilde{x}|\right)^{2 j-3} .
\end{aligned}
$$
and hence
$$
\begin{aligned}
& |\tilde{x}|^k \frac{1}{(j-k-1) !}\left(a(j+1-k)+\frac{3}{\sqrt{\alpha(0)}}+|\tilde{x}|\right)^{2(j-k-1)} \leq \frac{1}{(j-1) !}\left(a_j+\frac{3}{\sqrt{\alpha(0)}}+|\tilde{x}|\right)^{2(j-1)}, \\
& |\tilde{x}|^k \frac{1}{(j-k-3) !}\left(a(j-1-k)+\frac{3}{\sqrt{\alpha(0)}}+|\tilde{x}|\right)^{2(j-3-k)} \leq \frac{1}{(j-1) !}\left(a j+\frac{3}{\sqrt{\alpha(0)}}+|\tilde{x}|\right)^{2 j-3} .
\end{aligned}
$$
Therefore,
$$
\begin{aligned}
& \frac{1}{(j-1) !}\left(a j+\frac{3}{\sqrt{\alpha(0)}}+|\tilde{x}|\right)^k \mathrm{e}^{-\sqrt{\alpha(0)} \tilde{x}} \leq C\left(a^2 \mathrm{e}\right)^{j-1} j^{j-1} \mathrm{e}^{\frac{6}{a \sqrt{\alpha(0)}}} \mathrm{e}^{-\frac{\sqrt{\alpha(0)}}{2} \tilde{x}}, \\
& \frac{1}{(j-3) !}\left(a j+\frac{3}{\sqrt{\alpha(0)}}+|\tilde{x}|\right)^k \mathrm{e}^{-\frac{\sqrt{\alpha(0)}}{2} \tilde{x}} \leq C\left(a^2 \mathrm{e}\right)^{j-3}(j-2)^{j-3} \mathrm{e}^{\frac{6}{a \sqrt{\alpha(0)}}} \mathrm{e}^{-\frac{\sqrt{\alpha(0)}}{2} \tilde{x}},
\end{aligned}
$$
and this gives
$$
\begin{aligned}
& \left\|L\left(\tilde{u}_M^{B L}\right)\right\|_{L^{\infty}(I)} \leq \varepsilon^M \sum_{\substack{1 \leq k \leq j \leq M+1\\ j-2+k>M-1}} C\left[C_\alpha \gamma_\alpha^k \gamma_*^{j-k}\left(a^2 \mathrm{e}\right)^{j-1} j^{j-1} \mathrm{e}^{-\frac{\sqrt{\alpha(0)}}{2}} \tilde{x}+\right. \\
& \left.+C_{\alpha^{\prime}} \gamma_{\alpha^{\prime}}^k \gamma_*^{j-k}\left(a^2 \mathrm{e}\right)^{j-1} j^{j-1} \mathrm{e}^{-\frac{\sqrt{\alpha(0)}}{2} \tilde{x}}\right]+ \\
& +\varepsilon^{M-1} \sum_{\substack{0 \leq k \leq j-2 \leq M+1\\ j+k>M-1}} C_\beta \gamma_\beta^k \gamma_*^{j-2-k}\left(a^2 \mathrm{e}\right)^{j-3}(j-2)^{j-3} \mathrm{e}^{-\frac{\sqrt{\alpha(0)}}{2} \tilde{x}} \\
& \leq \varepsilon^M C\left[\sum_{\substack{1 \leq k \leq j \leq M+1\\ j-2+k>M-1}} \gamma_*^j\left(a^2 \mathrm{e}\right)^{j-1} j^{j-1} \mathrm{e}^{-\frac{\sqrt{\alpha(0)}}{2}} \tilde{x}\left(C_\alpha\left(\frac{\gamma_\alpha}{\gamma_*}\right)^k+C_{\alpha^{\prime}}\left(\frac{\gamma_{\alpha^{\prime}}}{\gamma_*}\right)^k\right)+\right. \\
& \left.+\sum_{\substack{0 \leq k \leq j-2 \leq M+1\\j+k>M-1}} C_\beta \gamma_*^{j-2}\left(a^2 \mathrm{e}\right)^{j-3}(j-2)^{j-3} \mathrm{e}^{-\frac{\sqrt{\alpha(0)}}{2}} \tilde{x}\left(\frac{\gamma_\beta}{\gamma_*}\right)^k\right] . \\
&
\end{aligned}
$$
We recall that in Theorem \ref{thm:main0} the constant $\gamma_*$ is chosen in such a way so that
 the sums are bounded by convergent geometric series. Therefore,
$$
\begin{aligned}
\left\|L\left(\tilde{u}_M^{B L}\right)\right\|_{L^{\infty}(I)} & \leq C \varepsilon^M\left(a^2 \mathrm{e}\right)^M \gamma_*^{M+1}(M+1)^M \mathrm{e}^{-\sqrt{\alpha(0) \tilde{x}}} \\
& \leq C \gamma_*\left(a^2 \mathrm{e} \varepsilon \gamma_*(M+1)\right)^M .
\end{aligned}
$$
The term $\left\|L\left(\hat{u}_M^{B L}\right)\right\|_{L^{\infty}(I)}$ satisfies an analogous result. Thus we have
$$
\begin{aligned}
& \left\|L\left(r_M\right)\right\|_{L^{\infty}(I)} \leq \\
& \leq\left\|L\left(u-u_M^s\right)\right\|_{L^{\infty}(I)}+\left\|L\left(\tilde{u}_M^{B L}\right)\right\|_{L^{\infty}(I)}+\left\|L\left(\hat{u}_M^{B L}\right)\right\|_{L^{\infty}(I)} \\
& \leq C\left(a^2 \mathrm{e} \varepsilon \gamma_*(M+1)\right)^M .
\end{aligned}
$$
We choose $M + 1$ to be the integer part of $q/\varepsilon$, where $q =\frac{1}{\gamma_* a^2 e^2}$. Then we get
$$
a^2 \gamma_* \mathrm{e} \varepsilon(M+1) \leq \mathrm{e}^{-1}
$$
and $M \ge -2+q/\varepsilon$, therefore
$$
\left\|L\left(r_M\right)\right\|_{L^{\infty}(I I)} \leq C \mathrm{e}^{-M} \leq C \mathrm{e}^{-q / \varepsilon}
$$
We have shown that $r_M$ has exponentially small values at the endpoints of $[0, 1]$ and
$Lr_M$ is uniformly bounded by an exponentially small quantity on the interval $(0, 1)$.
By stability we have the desired result.
\end{proof}
\section{Appendix}

We present the proof of Proposition \ref{prop:lem_aux2}.

We determine the solution with the aid of a Green’s function. For $x \in (0, \infty)$, we seek for a solution 
of the form
$$
w(x)=\int_{-\infty}^{\infty} G(x, \xi) f(\xi) d \xi
$$
where $G$ is the Green's function. The solutions of the characteristic equation $m^4 - \lambda m^2 = 0$ are
$0, 0, \pm \kappa$. Hence, the general solution is
$$
w(x)=A+B x+C \mathrm{e}^{\kappa x}+D \mathrm{e}^{-\kappa x},
$$
for some  $A, B, C, D$, which depend on $\xi$. By enforcing continuity at $x = \xi$ for 
$G, \frac{\partial G}{\partial x}, \frac{\partial^2 G}{\partial x^2}$, and a jump condition for 
$\frac{\partial^3 G}{\partial x^3}$, we find 
$$
G(x, \xi)=\left\{\begin{array}{l}
-\frac{\xi}{\lambda}+\frac{x}{\lambda}-\frac{\mathrm{e}^{-\kappa \xi}}{2 \kappa \lambda} \mathrm{e}^{\kappa x}+\frac{2-\mathrm{e}^{-\kappa \xi}}{2 \kappa \lambda} \mathrm{e}^{-\kappa x}, \quad x<\xi \\
\frac{2-\mathrm{e}^{\kappa \xi}-\mathrm{e}^{-\kappa \xi}}{2 \kappa \lambda} \mathrm{e}^{-\kappa x}, \quad x>\xi
\end{array}\right. ,
$$
and the solution $w$ is given by
$$
\begin{aligned}
w(z)= & \frac{\mathrm{e}^{-\kappa z}}{\lambda^2} \int_0^{\infty} F(y / \kappa) d y-\frac{\mathrm{e}^{-\kappa z}}{2 \lambda^2} \int_0^{\infty} \mathrm{e}^{-y} F(y / \kappa) d y \\
& -\frac{\mathrm{e}^{-\sqrt{\lambda} z}}{2 \lambda^2} \int_0^{\kappa z} \mathrm{e}^y F(y / \kappa) d y-\frac{1}{\lambda^2} \int_{\kappa z}^{\infty} y F(y / \kappa) d y \\
& +\frac{z}{\kappa \lambda} \int_{\kappa z}^{\infty} F(y / \kappa) d y-\frac{\mathrm{e}^{\kappa z}}{2 \lambda^2} \int_{\kappa z}^{\infty} \mathrm{e}^{-y} F(y / \kappa) d y-\frac{g_1}{\kappa} \mathrm{e}^{-\kappa z} .
\end{aligned}
$$
We remove the restriction to $(0,\infty)$ with analytic continuation and we procced by
giving bounds on all the above terms. In order to bound the third term, we use as
path of integration the straight line connecting 0 and $\kappa z$ to get
$$
\begin{aligned}
\left|\frac{\mathrm{e}^{-\sqrt{\lambda} z}}{2 \lambda^2} \int_0^{\kappa z} \mathrm{e}^y F(y / \kappa) d y\right| & \leq \frac{1}{2\left|\lambda^2\right|} \mathrm{e}^{-\operatorname{Re}(\kappa z)} \int_0^1 C_F(q+t|z|)^{2 j-1}|\kappa z| \mathrm{e}^{-\operatorname{Re}(\kappa t z)} \mathrm{e}^{\operatorname{Re}(\kappa t z)} d t \\
& \leq C_F \frac{|\kappa|}{2\left|\lambda^2\right|} \frac{\mathrm{e}^{-\operatorname{Re}(\kappa z)}}{2 j}\left((q+|z|)^{2 j}-q^{2 j}\right) .
\end{aligned}
$$
The following, which gives an estimate for the sixth term, has an almost identical proof
to that of Lemma 7.3.6 in \cite{melenk}:
$$
\begin{aligned}
& \quad\left|\frac{\mathrm{e}^{\sqrt{\lambda} z}}{2 \lambda^2} \int_{\kappa z}^{\infty} \mathrm{e}^{-y} F(y / \kappa) d y\right|=\frac{1}{2|\lambda|^2}\left|\int_0^{\infty} \mathrm{e}^{-y} F(z+y / \kappa) d y\right| \\
& \quad \leq \frac{1}{2|\lambda|^2} \int_0^{\infty} \mathrm{e}^{-y} C_F \mathrm{e}^{-\operatorname{Re}(\kappa z+y)}\left(q+|z|+\frac{y}{|\kappa|}\right)^{2 j-1} d y \\
& \quad \leq \frac{1}{2|\lambda|^2} C_F \mathrm{e}^{-\operatorname{Re}(\kappa z)}|\kappa|^{-(2 j-1)} \int_0^{\infty} \mathrm{e}^{-2 y}(|\kappa| q+|\kappa z|+y)^{2 j-1} d y \\
& \leq \frac{1}{2|\lambda|^2} C_F \mathrm{e}^{-R e(\kappa z)}|\kappa|^{-(2 j-1)} \int_0^{\infty} \mathrm{e}^{-y} \frac{1}{2}(|\kappa| q+|\kappa z|+y / 2)^{2 j-1} d y \\
& \leq \frac{1}{2|\lambda|^2} C_F \mathrm{e}^{-R e(\kappa z)}|\kappa|^{-(2 j-1)} 2^{-2 j} \int_0^{\infty} \mathrm{e}^{-y}(2|\kappa| q+2|\kappa z|+y)^{2 j-1} d y \\
& =\frac{C_F}{2} \mathrm{e}^{-R e(\kappa z)}|\kappa|^{-(2 j-1)} 2^{-2 j} \mathrm{e}^{2|\kappa|(q+|z|)} \Gamma(2 j, 2|\kappa|(q+|z|)),
\end{aligned}
$$
where $\Gamma ( \cdot, \cdot)$ denotes the incomplete Gamma function. We observe that
$2 |\kappa| q \ge 2j - 1$. Thus we may employ the estimate
$$
\Gamma(a, \xi) \leq \frac{\mathrm{e}^{-\xi} \xi^a}{|\xi|-a_0}, \quad a_0=\max \{a-1,0\}, \operatorname{Re}(\xi) \geq 0,|\xi|>a_0,
$$
(which can be found in \cite[Chapter 4, Section 10]{Olver} to get
$$
\begin{aligned}
\left|\frac{\mathrm{e}^{\sqrt{\lambda} z}}{2|\lambda|^2} \int_{\kappa z}^{\infty} \mathrm{e}^{-y} F(y / \kappa) d y\right| & \leq \frac{1}{2|\lambda|^2} C_F \mathrm{e}^{-\operatorname{Re}(\kappa z)}|\kappa| \frac{(q+|z|)^{2 j}}{2|\kappa|(q+|z|)-(2 j-1)} \\
& \leq \frac{1}{2|\lambda|^2} C_F \mathrm{e}^{-\operatorname{Re}(\kappa z)}|\kappa| \frac{(q+|z|)^{2 j}}{6 j+1} .
\end{aligned}
$$
We note that the integral $\int_0^{\infty} \mathrm{e}^{-y} F(y / \kappa) d \xi$ is treated 
just like the sixth term. Hence, for the second term, we obtain
$$
\left|\frac{\mathrm{e}^{-\sqrt{\lambda} z}}{2 \lambda^2} \int_0^{\infty} \mathrm{e}^{-y} F(y / \kappa) d y\right| \leq \frac{1}{2|\lambda|^2} C_F \mathrm{e}^{-\operatorname{Re}(\kappa z)}|\kappa| \frac{(q+|z|)^{2 j}}{6 j+1}.
$$
The first and the last term of the solution satisfy the desired estimate as they are
multiplied by $e^{-\kappa z}$. To complete the proof we need to bound the term
$\frac{z}{\kappa \lambda} \int_{\kappa z}^{\infty} F(y / \kappa) d y-\frac{1}{\lambda^2} \int_{\kappa z}^{\infty} y F(y / \kappa) d y.$ We have
\begin{eqnarray*}
\frac{1}{|\lambda|^2}\left|\int_{\kappa z}^{\infty}(\kappa z-y) F(y / \kappa) d y\right|&=&
\frac{1}{|\lambda|^2}\left|\int_{\kappa z}^{\infty}(y-\kappa z) F(y / \kappa) d y\right|  
\end{eqnarray*}
\vspace{-0.25cm}
\begin{eqnarray*} 
&=& \frac{1}{|\lambda|^2}\left|\int_0^{\infty} t F\left(\frac{t}{\kappa}+z\right) d t\right| \\
&\leq& \frac{C_F \mathrm{e}^{-\operatorname{Re}(\kappa z)}}{|\lambda|^2} \int_0^{\infty} t\left(q+|z|+\frac{t}{|\kappa|}\right)^{2 j-1} \mathrm{e}^{-t} d t \\
&\leq& \frac{C_F \mathrm{e}^{-\operatorname{Re}(\kappa z)}}{|\lambda|^2} \sum_{n=0}^{2 j-1} \frac{1}{|\kappa|^n}\left(\begin{array}{c}
2 j-1 \\
n
\end{array}\right)(q+|z|)^{2 j-1-n} \int_0^{\infty} t^{n+1} \mathrm{e}^{-t} d t \\
&\leq&\frac{C_F \mathrm{e}^{-R e(\kappa z)}}{|\lambda|^2} \sum_{n=0}^{2 j-1} \frac{1}{|\kappa|^n}\left(\begin{array}{c}
2 j-1 \\
n
\end{array}\right)(q+|z|)^{2 j-1-n} \Gamma(n+2) \\
&\leq& \frac{C_F \mathrm{e}^{-R e(\kappa z)}}{|\lambda|^2}(q+|z|)^{2 j-1} \sum_{n=0}^{2 j-1} \frac{1}{|\kappa|^n} \frac{(n+1)(2 j-1) !}{(2 j-1-n) !}(q+|z|)^{-n} \\
&\leq& \frac{C_F \mathrm{e}^{-R e(\kappa z)}}{|\lambda|^2}(q+|z|)^{2 j-1} \sum_{n=0}^{2 j-1} \frac{(n+1)}{|\kappa|^n}(2 j-1)^n(q+|z|)^{-n} \\
&\leq& \frac{C_F \mathrm{e}^{-R e(\kappa z)}}{|\lambda|^2}(q+|z|)^{2 j-1} \sum_{n=0}^{2 j-1}(n+1)(q / 2)^n(q+|z|)^{-n} \\
&\leq& \frac{C_F \mathrm{e}^{-\operatorname{Re}(\kappa z)}}{|\lambda|^2}(q+|z|)^{2 j-1} \sum_{n=0}^{\infty} 2^{-n}(n+1) \\
&\leq& \frac{4 C_F \mathrm{e}^{-\operatorname{Re}(\kappa z)}}{|\lambda|^2}(q+|z|)^{2 j-1} \\
&\leq& \frac{C_F \mathrm{e}^{-\operatorname{Re}(\kappa z)}(q+|z|)^{2 j}}{j|\kappa \lambda|} .
\end{eqnarray*}

Combining all the above inequalities we get
$$
|w(z)| \leq C\left[\frac{C_F}{j}(q+|z|)^{2 j}+\frac{\left|g_1\right|}{|\kappa|}\right] \mathrm{e}^{-R e(\kappa z)}, \quad \text { for all } z \in \mathbb{C}.
$$


\end{document}